\newtheorem{lemma}{Lemma}
\newtheorem{theorem}{Theorem}
\newtheorem{algo}{Algorithm}
\newtheorem{coro}{Corollary}
\newtheorem{defi}{Definition}
\newtheorem{remark}{Remark}
\newtheorem{cond}{Condition}
\newtheorem{prop}{Proposition}
\newtheorem{example}{Example}
\newcommand{\NN}{{\mathbb N}}
\newcommand{\QQ}{{\mathbb Q}}
\newcommand{\ZZ}{{\mathbb Z}}
\newcommand{\RR}{{\mathbb R}}
\newcommand{\FF}{{\mathbb F}}
\newcommand{\bsx}{\boldsymbol{x}}
\newcommand{\bsc}{\boldsymbol{c}}
\newcommand{\bsT}{{\bf T}}
\newenvironment{proof}{\begin{trivlist}
   \item[\hskip\labelsep{\it Proof.}]}{$\hfill\Box$\end{trivlist}}
\providecommand{\keywords}[1]{\textbf{{Keywords}} #1}
\providecommand{\MSC}[1]{
   \textbf{{Math.\ Subj.\ Class.\ (2010)}} #1}
\newcommand{\notiz}[1]{}
\newcommand{\vol}{{\rm vol}}
\title{\scshape{An extension of the digital method \\based on $b$-adic integers}}
\author{Roswitha Hofer\thanks{supported by the Austrian Science Fund (FWF):
Project F5505-N26, which is a part of the Special Research Program
``Quasi-Monte Carlo Methods: Theory and Applications''} \ and \'Isabel Pirsic
\thanks{supported by the Austrian Science Fund (FWF): Project F5511-N26,
which is a part of the Special Research Program ``Quasi-Monte Carlo Methods:
Theory and Applications'' as well as Project P27351-N26}}
\begin{document}

\maketitle

\begin{abstract}
We introduce a hybridization of digital sequences with uniformly distributed sequences
in the domain of $b$-adic integers, $\ZZ_{b}, b\in\NN\setminus\{1\}$, by using such
sequences as input for generating matrices. The generating matrices are then
naturally required to have finite row-lengths. We exhibit some relations of the
`classical' digital method to our extended version, and also give several examples of
new constructions with their respective quality assessments in terms of $t,\mathbf T$ and 
discrepancy.
\end{abstract}

\keywords{
 quasi-Monte Carlo methods, construction, digital method,\\ digit expansion,
 $q$-adic integers }

\MSC{ 11J71, 11K16, 11K38, 11F85}

\section{Introduction}
Constructing sequences with good equidistribution properties is an important problem in number theory and has applications to quasi-Monte Carlo 
methods in numerical analysis (see, e.g., \cite{DP10, niesiam}). 
In this context, the star discrepancy appears as an important 
measure of uniform distribution. For a given dimension $s\geq 1$, let $J$ be a subinterval of $[0,1]^s$ and let $\bsx_0,\ldots, \bsx_{N-1}$ 
be $N$ points in $[0,1]^s$ (we speak also of a \emph{point set} $\mathcal{P}$ of $N$ points in $[0,1]^s$). We define the counting function $A$ for the interval $J$ by 
$A(J;\mathcal{P}):=\#\{0\leq n<N: \bsx_n\in J\}$. Then the \emph{star discrepancy} of the point set $\mathcal{P}$ consisting of the points
$\bsx_0,\ldots,\bsx_{N-1}$ is defined by 
$$D_N^*(\mathcal{P})=D_N^*(\bsx_0,\ldots, \bsx_{N-1})=\sup_{J}\left| \frac{A(J;\mathcal{P})}{N}-\vol(J)\right|,$$
where the supremum is extended over all subintervals $J$ of $[0,1]^s$ with one vertex at the origin. 

For a sequence $\mathcal{S}$ of points $\bsx_0,\bsx_{1},\ldots$ in $[0,1]^s$,
the star discrepancy of the first $N$ terms of $\mathcal S$ is defined as
$D_{N}^*(\mathcal{S})=D_N^*(\bsx_0,\ldots, \bsx_{N-1})$.  The sequence
$\mathcal{S}$ is called \emph{uniformly distributed} if and only if
$D^*_N(\mathcal{S})\to 0$ as $N\to \infty$.

We say that $\mathcal{S}$ is a \emph{low-discrepancy sequence} if
\begin{equation}\label{equ:DB}ND_N^*(\mathcal{S})=c(\log N)^s +O\left((\log
N)^{s-1}\right) \qquad \mbox{for all $N\geq 2$},\end{equation} where $c>0$ and
the implied constant do not depend on $N$.  It is conjectured that
$O(N^{-1}(\log N)^s)$ is the least possible order of magnitude in $N$ that can
be obtained for the star discrepancy of a sequence of points in $[0,1]^s$. 

The probably most widespread technique for constructing low-discrepancy
sequences is the  \textit{digital method} which was introduced by Niederreiter
\cite{N87} and later slightly generalized in \cite{NiXi96}.
\begin{algo}\label{algo:1}
Choose a dimension $s\in\NN$, a finite commutative ring $R$ with identity and
of order $b$, and set $Z_b=\{0,1,\ldots,b-1\}$. Choose 
\begin{enumerate}
\item[(i)] bijections $\psi_r:Z_b\to R$ for all integers $r\geq 0$, satisfying
 $\psi_r(0)=0$ for all sufficiently large $r$;
\item[(ii)] elements $c^{(i)}_{j,r} \in R$ for $1\leq i\leq s$, $j\geq 1$,
 $r\geq 0$; 
\item[(iii)] bijections $\lambda_{i,j}:R\to Z_b$ for $1\leq i\leq s$, 
 $j\geq 1$.
\end{enumerate} The $i$th component $\bsx_n^{(i)}$ of the $n$th point $\bsx_n$
of the sequence $(\bsx_n)_{n\geq 0}$ is defined using the base $b$ representation of
$n=\sum_{r=0}^\infty a_rb^r$ with $b_r\in Z_b$ and $a_r=0$ for all sufficiently
large $j$ as follows. 
\begin{equation}\label{eq:def1}
\bsx_n^{(i)}:=\sum_{j=1}^\infty \lambda_{i,j}\left(\sum_{r=0}^\infty c_{j,r}\psi_r(a_r)\right)/b^j.
\end{equation}
\end{algo}
Note that the inner sum in \eqref{equ:def1} is a finite sum because of the
choice that $\psi_r(0)=0$ for all sufficiently large $r$ and the fact that
$a_r=0$ for all sufficiently large $r$.

Usually the presentation of the digital method uses the concept of infinite \emph{generating matrices}, $C^{(i)}:=(c^{(i)}_{j,r})_{j\geq 1,r\geq 0}\in R^{\NN\times \NN_0}$ for $i\in\{1,\ldots,s\}$ with the construction given as follows. Set 
$$C^{(i)}\cdot\left(\begin{matrix}\psi_0(a_0)\\\psi_1(a_1)\\\vdots \end{matrix}\right)=:\left(\begin{matrix}y_{n,1}^{(i)}\\y_{n,2}^{(i)}\\\vdots \end{matrix}\right).$$
Then, 
$$\bsx^{(i)}_n=\sum_{j=1}^{\infty}\lambda_{i,j}(y^{(i)}_{n,j})b^{-j}.$$

Obviously the challenge is to find appropriate elements $c^{(i)}_{j,r}\in R$
such that the generated sequence $(\bsx_n)_{n\geq 0}$ is a low-discrepancy
sequence.

Most of the actual constructions choose the ring $R$ to be a finite field
$\FF_q$ with prime-power cardinality $q$. This has the advantage that basic
linear algebra is available and the distribution of the generated sequence
amongst elementary intervals is related to the rank-structure of the generating
matrices. 

An elementary interval in base $b$ is an interval $I\subset [0,1]^s$ of the form 
$$I=\prod_{i=1}^s\left[\frac{a_i}{b^{d_i}},\frac{a_i+1}{b^{d_i}}\right)$$
with nonnegative integers $d_i, 0\leq a_i<b^{d_i}$ for $i=1,\ldots ,s$.

The distribution amongst elementary intervals is relevant when determining the
quality-parameter function $\bsT$ or the quality parameter $t$ of the generated
sequence when it is considered as a $(\bsT,s)$-sequence in base $b$ in the
sense of Larcher and Niederreiter \cite{LN95} or a $(t,s)$-sequence in base $b$
in the sense of Niederreiter \cite{N87}. Those concepts later have been
modified by including truncation in \cite{Tez93,XiNi95,NieOez} in order to meet
certain requirements in special constructions. Throughout the paper
$[x]_{b,m}:=\sum_{j=1}^m x_jb^{-j}$ denotes the $m$-digit truncation of the
real $x\in[0,1]$ in base $b$ with a specific given base $b$ representation $
x=\sum_{j=1}^\infty x_jb^{-j}$, where the case that all but finitely many
$x_j=b-1$ is explicitly admissible as well. For a vector the base $b$ $m$-digit
truncation is applied by coordinates. 

\begin{defi}
Let $b,t,m$ be integers satisfying $b\geq 2$ and $0\leq t\leq m$. A \emph{$(t,m,s)$-net in base $b$} is a point set of $b^m$ points in $[0,1)^s$ such that every 
elementary interval $I \subseteq [0,1)^s$ in base $b$ with volume $b^{t-m}$ contains exactly $b^t$ points of the point set. 

Let $\bsT:\NN_0\to\NN_0$ satisfying $\bsT(m)\leq m$ for all $m\in\NN_0$. A
sequence $\bsx_0,\bsx_1,\ldots$ of points in $[0,1]^s$ is called a
\emph{$(\bsT,s)$-sequence in base $b$} if for all integers $k\geq 0$ and $m$
satisfying $\bsT(m)<m$ the points $[\bsx_n]_{b,m}$ with $kb^m\leq n<(k+1)b^m$
form a $(\bsT(m),m,s)$-net in base $b$.  As a special case, such a sequence is
called a \emph{$(t,s)$-sequence in base $b$} with $t\in\NN_0$ if it is a
$(\bsT,s)$-sequence in base $b$ with $\bsT(m)\leq t$ for all $m\geq 0$.

\end{defi}

A $(\bsT,s)$-sequence in base $b$ is uniformly distributed if
$\lim_{N\to\infty}(m-\bsT(m))=\infty$. In particular, every $(t,s)$-sequence is
uniformly distributed. Furthermore, if
$(\frac{1}{r}\sum_{m=1}^rb^{\bsT(m)})_{r\in\NN}$ is bounded, then the
$(\bsT,s)$-sequence in base $b$ is a low-discrepancy sequence. Consequently,
every $(t,s)$-sequence is a low-discrepancy sequence. 

It is well-known that the digital method in Algorithm~\ref{algo:1} applied to a
finite field with cardinality $q$ constructs a digital $(\bsT,s)$-sequence over
$\FF_q$ if and only if the following condition holds. 
\begin{cond}\label{cond:1}
For every 
integer $m$ satisfying $m>\bsT(m)$ and all nonnegative
 integers $d_1,\ldots,d_s \geq 0$ with $1 \leq d_1+\cdots + d_s \leq m-\bsT(m)$, the 
$(d_1+\cdots + d_s)\times m$ matrix over $\FF_q$ formed by the row vectors 
$$(c^{(i)}_{j,0},c^{(i)}_{j,1},\ldots,c^{(i)}_{j,m-1}) \in \FF_q^m$$
with $1 \le j \le d_i$ and $1 \le i \le s$, has rank $d_1+\cdots + d_s$. 
\end{cond}
Analogously, Algorithm \ref{algo:1} produces a $(t,s)$-sequence if Condition \ref{cond:1} holds with $\bsT(m)=t$ for $m\geq t$ and $\bsT(m)=m$ else. 
Note that Algorithm~\ref{algo:1} produces a uniformly distributed sequence if and only if the following condition holds. 
\begin{cond}\label{cond:ud}
For every choice of $d_1,\ldots,d_s\geq 0$ (not all zero) the rows $\bsc_j^{(i)}=(c^{(i)}_{j,r})_{r\geq 0}$, $1\leq j\leq d_i,\, 1\leq i\leq s$ are linearly independent. 
\end{cond}
For more details on $(\bsT,s)$-sequences and their digital versions we refer the interested reader to \cite{niesiam,DP10}.

For reasons related to the uniform distribution of mixed-base digital sequences
so-called finite-row generating matrices, i.e., matrices having in each row
only finitely many nonzero entries, have been the subject of investigation. We refer to
\cite{hklp,hoflar,HofPir11,hoFFA,HofPir12,hoMM,HofNie} for examples and
constructions of finite-row generating matrices and more about their motivation. Note that
the finite-row property of the matrices, i.e., for every $i=1,\ldots,s$ and
$j\geq 1$, $c^{(i)}_{j,r}=0$ for all sufficiently large $r$, ensures the
finiteness of the inner sum $\sum_{r=0}^\infty c_{j,r}\psi_r(a_r)$ in
\eqref{eq:def1}. Hence, when using finite-row generating matrices in
the digital method, any sequence of bijections $(\psi_r)_{r\geq 0} $ can be
used and the index sequence for the construction can in accordance be chosen freely as any
sequence of $b$-adic integers, i.e., $\ZZ_{b}$, instead of just the nonnegative integers.
Importantly, note that $b$ is \emph{not} required to be prime.

This yields the following alternative algorithm. 

\begin{algo}\label{algo:2}
Choose a dimension $s\in\NN$, a finite commutative ring $R$ with identity and of order $b$, and set $Z_b=\{0,1,\ldots,b-1\}$. Choose 
\begin{enumerate}

\item[(i)] bijections $\psi_r:Z_b\to R$ for all integers $r\geq 0$;
\item[(ii)] elements $c^{(i)}_{j,r} \in R$ for $1\leq i\leq s$, $j\geq 1$, $r\geq 0$, satisfying $c^{(i)}_{j,r}=0$ for all sufficiently large $r$ for fixed $i,j$; 
\item[(iii)] bijections $\lambda_{i,j}:R\to Z_b$ for $1\leq i\leq s$, $j\geq 1$.
\item[(iv)] a sequence $(s_n)_{n\geq 0}$ in $\ZZ_b$.  
\end{enumerate}
The $i$th component $\bsx_n^{(i)}$ of the $n$th point $\bsx_n$ of the sequence $(\bsx_n)$ is defined using the $b$-adic representation of $s_n=\sum_{r=0}^\infty a_rb^r$ with $a_r\in Z_b$ as follows. 
\begin{equation}\label{equ:def1}
\bsx_n^{(i)}=\sum_{j=1}^\infty \lambda_{i,j}\left(\sum_{r=0}^\infty c_{j,r}\psi_r(a_r)\right)/b^j.
\end{equation}
\end{algo}

The paper is organized as follows:
in Section 2 we recall definitions pertaining to $b$-adic numbers
 and introduce some lemmas. \\
Section 3 explains relations between our new, extended Algorithm 2 and
Algorithm 1 of the classical digital method.\\
Specific examples of new constructions and their quality assessments will
be given in Section 4.

\section{Relevant background on $b$-adic numbers}

An introduction in and construction of $b$-adic integers and numbers for arbitrary
integers $b\geq 2$ can, e.g.,  be found in
 \cite{Mahler}. Uniform distribution in the $b$-adic integers was introduced by Meijer \cite{Meijer},
 whose definitions we will employ and first briefly recall here. We use the name $b$-adic
 rather than $g$-adic, i.e., the letter $b$, to signify a not necessarily prime digit base, as is
 customary in the literature on uniform distribution modulo 1.

\subsection{$b$-adic numbers and integers}
(Detailed proofs for the claims in this section can be found in \cite{Meijer}.)

Analogously to the case of $p$-adic numbers, $b$-adic numbers can be introduced
as completion of $\QQ$, only in this case not by a valuation (using the definition of Meijer,
in the sense of an `absolute value'), but the following pseudo-valuation. 
\begin{defi}\label{bvaldef}
 Let $b\geq 2$ be a positive integer and $a\in\QQ$ a rational. 
 The prime decompositions of $b,a,$ shall be given as
 $$ b= p_{1}^{\beta_{1}}\dots p_{r}^{\beta_{r}}, \,
    a= \pm p_{1}^{\alpha_{1}}\dots p_{s}^{\alpha_{s}},\quad \alpha_{i}\in\ZZ;\,r,s,\beta_{i}\in\NN.$$
 Then the $b$-adic pseudo-valuation is defined by
 $$ |a|_{b} := \max_{i, p_{i}|b}\  b^{-\alpha_{i}/\beta_{i}},\,|0|_{b}:=0. $$
 
\end{defi}
The `pseudo' part signifies that the multiplicative identity demanded for a valuation
only holds up to inequality, i.e.,$$ |m n|_{b} \leq |m|_{b}|n|_{b}.$$
Nevertheless, $d(x,y):=|x-y|_{b}$ is a 
(non-archimedean) metric, so the following definition is valid.
\begin{defi}
 Let $b\geq2$ be a positive integer.
 The ring obtained by completion of $\QQ$ with respect to the $b$-adic pseudo-valuation shall be
 called  the ring $\QQ_{b}$ of $b$-adic numbers, and accordingly we define the subset
 \[ \ZZ_{b} :=\{ a : a\in\QQ_{b}, |a|_{b}\leq 1\}
 \]
 of $b$-adic integers.
\end{defi}
We remark the following observations:
\begin{enumerate}
\item Clearly, $\QQ\subset\QQ_{b},\, \ZZ\subset\ZZ_{b},$ and $\ZZ_{b}$ 
  is indeed a subring of $\QQ_{b}$.
\item $1/b<|a|_{b}\leq 1$ for $a\in\NN, 1\leq a<b$ where for composite $b$, values less than $1$ 
can indeed occur, e.g.,  $|6|_{24}=|18|_{24}=1/\sqrt[3]{24},\  |12|_{24}=1/\sqrt[3]{24^{2}}$.
 \item For $b$ prime, the definitions coincide with the usual notions. For
  composite $b$, we have the decomposition
  \[\QQ_{b} \cong \QQ_{p_{1}}\times\dots\times\QQ_{p_{r}},\]
  using the notation of Definition \ref{bvaldef}. 
 \item As in the $p$-adic case, each $a\in\QQ_{b}$ has a unique representation
 \[  a = \sum_{i=k_{0}}^{\infty} a_{i} b^{i},\quad k_0\in\ZZ,\, a_{i}\in\{0,\dots,b-1\},\, a_{k_{0}}\neq0, \]
 and $|a|_{b} = b^{-k_{0}} |a_{k_{0}}|_{b}$. For $b$-adic integers, $k_{0}$ is $0$, i.e., we get
 a representation as a formal power series in $b$. Furthermore, for all $a\in\NN_{0}$ the
 digit expansion in base $b$ and the $b$-adic representation coincide.

\item The number (in $\QQ$) obtained by truncation of the unique representation of a number
$a\in\QQ_{b}$ at index $k\in\ZZ$ is defined by $\tau_{k}(a):=\sum_{i=k_0}^{k-1}a_ib^i$. 
A $b$-adic integer $a$ is a unit if and only if $\gcd(\tau_1(a),b)=1$. Moreover, if $a$ is a unit then $|a|_b=|a^{-1}|_b=1$ (see \cite[Lemma~4]{Meijer}). 
\end{enumerate}

\subsection{Uniform distribution in $\ZZ_b$}

First we recall the definition of uniform distribution in the integers.
\begin{defi}
 Let $\omega=(x_{n})_{n\geq0}\in\NN_{0}^{\NN_{0}}$ be a sequence of nonnegative integers, and $m\in\NN, k\in\NN_{0},\, m>1$.
 If for any $a,\,0\leq a<m$ we have
 \[
  \lim_{N\to\infty}\frac{\#\{n : x_{n}\equiv a \bmod m,\, n< N\}}{N} = \frac1{m},
 \]
 $\omega$ is called uniformly distributed (u.d.) modulo $m$.
 
 If $\omega$ is u.d.\  modulo $m$ for {any} $m>1$ it is called
 uniformly distributed in $\ZZ$.
 \end{defi}
 
The $b$-adic case models this very closely.
However, here we require the `local uniformity'  only at powers of $b$.

\begin{defi}
 Let $\omega=(x_{n})_{n\geq0}\in\ZZ_{b}^{\NN_{0}}$ be a sequence of $b$-adic integers, and $k\in\NN_{0}$.
 If for any $a,\,0\leq a<b^{k}$ we have
 \[
  \lim_{N\to\infty}\frac{\#\{n : |x_{n}- a|_b \leq b^{-k},\, n< N\}}{N} = \frac1{b^{k}},
 \]
 $\omega$ is called $k$-uniformly distributed in $\ZZ_{b}$.
 
 If $\omega$ is $k$-uniformly distributed  for {any} $k\geq 1$ it is called
 uniformly distributed in $\ZZ_{b}$.
 \end{defi}

For reference we state the precise relation between the two notions as a lemma 
(Cf. Corollary 1 in \cite{MeijerShiue}). It is easily seen by
first observing that $\tau_{k}(x) \equiv a \bmod b^{k}\iff |x-a|_{b}\leq b^{-k}$ for
$x\in\ZZ_{b},a\in\NN_{0},\, a<b^{k},\,k\in\NN_{0}$.
\begin{lemma} \label{udrelation}
 A sequence $\omega=(x_{n})_{n\geq0}\in\ZZ_{b}^{\NN_{0}}$ is u.d.\  in $\ZZ_{b}$
 if and only if the sequences $(\tau_{k}(x_{n}))_{n\geq0}\in\NN_{0}^{\NN_{0}}$ are
 u.d.\  modulo $b^{k}$ for every $k\geq 1$.
\end{lemma}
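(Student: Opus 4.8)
The plan is to deduce the lemma from the single pointwise equivalence flagged in the text: for $x\in\ZZ_b$, $k\in\NN_{0}$ and an integer $a$ with $0\le a<b^{k}$,
$$\tau_k(x)\equiv a\pmod{b^{k}}\iff |x-a|_b\le b^{-k}.$$
Granting this, the quantities compared in the two definitions are built from \emph{identical} counting sets for every fixed $k$, and the equivalence of the limiting frequencies becomes immediate.

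First I would establish the pointwise equivalence, reducing it to a statement about ordinary integers. Writing $x-a=(x-\tau_k(x))+(\tau_k(x)-a)$ and noting $x-\tau_k(x)=\sum_{i\ge k}x_ib^{i}=b^{k}y$ for some $y\in\ZZ_b$, one gets $|x-\tau_k(x)|_b\le |b^{k}|_b\,|y|_b\le b^{-k}$. Since $d(x,y)=|x-y|_b$ is non-archimedean, the strong triangle inequality then shows $|x-a|_b\le b^{-k}$ if and only if $|\tau_k(x)-a|_b\le b^{-k}$. As $\tau_k(x)-a$ is an integer, everything comes down to the arithmetic fact that, for $m\in\ZZ$, $|m|_b\le b^{-k}$ if and only if $b^{k}\mid m$. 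This I would verify directly from Definition~\ref{bvaldef}: with $b=p_{1}^{\beta_{1}}\cdots p_{r}^{\beta_{r}}$, the inequality $|m|_b\le b^{-k}$ says $\alpha_i\ge k\beta_i$ for every prime $p_i\mid b$ (where $\alpha_i$ is the exponent of $p_i$ in $m$), which for distinct primes is exactly $b^{k}\mid m$. Finally, since $0\le\tau_k(x)<b^{k}$ and $0\le a<b^{k}$, the divisibility $b^{k}\mid(\tau_k(x)-a)$ is the same as the congruence $\tau_k(x)\equiv a\pmod{b^{k}}$.

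With the pointwise equivalence in hand, the conclusion is a formality. Fix $k\ge1$ and $a$ with $0\le a<b^{k}$; the equivalence gives, for every $N$,
$$\{n<N:|x_n-a|_b\le b^{-k}\}=\{n<N:\tau_k(x_n)\equiv a\pmod{b^{k}}\},$$
so the two normalized counts agree exactly. Hence $\omega$ is $k$-uniformly distributed in $\ZZ_b$ precisely when $(\tau_k(x_n))_{n\ge0}$ is u.d.\ modulo $b^{k}$, and intersecting these statements over all $k\ge1$ yields the lemma. The one genuinely delicate point is the pointwise equivalence for composite $b$: there $|\cdot|_b$ is only a pseudo-valuation, so one must lean on the non-archimedean (ultrametric) property rather than on multiplicativity, together with the prime-by-prime reading of $|m|_b$ for integers $m$; once these are isolated, the remaining arguments are routine.
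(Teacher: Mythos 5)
Your proposal is correct and follows exactly the route the paper indicates: it reduces the lemma to the pointwise equivalence $\tau_k(x)\equiv a \pmod{b^k}\iff |x-a|_b\leq b^{-k}$, which the paper states without proof (citing Corollary 1 of Meijer--Shiue), and then observes that the counting sets in the two definitions coincide. Your verification of the pointwise equivalence via the ultrametric inequality, the bound $|x-\tau_k(x)|_b\leq|b^k|_b|y|_b\leq b^{-k}$, and the prime-by-prime reading of $|m|_b\leq b^{-k}$ as $b^k\mid m$ correctly supplies the details the paper omits.
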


As examples of uniformly distributed sequences in $\ZZ$ the following are listed in
\cite[Ch.5]{KN74}. By the previous definitions and lemma they can as well be regarded as u.d.\  in 
$\ZZ_{b}$ for any $b\geq 2$.
\begin{enumerate}
\item $(\lfloor\alpha n\rfloor)_{n\geq0}$ for irrational $\alpha\in\RR\setminus\QQ$.
\item $(\lfloor f(n) \rfloor)_{n\geq0}$ for $f\in\RR[x]$, where some coefficient apart from the
 constant is irrational.
 \item $(\lfloor\alpha n^{\sigma}\rfloor)_{n\geq0}$ for  $\alpha\in\RR,\sigma\in\RR^{+}\setminus\NN$. 
\end{enumerate}
From \cite[Th.2]{Meijer} we know that $(n)_{n\geq 0 }$ is u.d.\ in $\ZZ_{b}$.
An example of how to obtain new u.d.\ sequences from other u.d.\ sequences
 is also  given by  \cite[Th.3]{Meijer}. 
\begin{lemma}
 Let $a,c\in\ZZ_{b}$ and $(x_{n})_{n\geq0}$ be a sequence u.d.\ in $\ZZ_{b}$.
 Then the sequence $(ax_{n}+c)_{n\geq0}$ is also u.d.\ in $\ZZ_{b}$ if and only if $a$ is a unit.
\end{lemma}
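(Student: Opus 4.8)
The plan is to reduce the statement to uniform distribution modulo $b^k$ of the truncated sequences via Lemma~\ref{udrelation}, exploiting that truncation $\tau_k$ respects the ring operations modulo $b^k$. The crucial preliminary observation is that for $x,y\in\ZZ_b$ one has $\tau_k(x+y)\equiv\tau_k(x)+\tau_k(y)\pmod{b^k}$ and $\tau_k(xy)\equiv\tau_k(x)\tau_k(y)\pmod{b^k}$; this is immediate from writing $x=\tau_k(x)+b^k x'$ and $y=\tau_k(y)+b^k y'$ with $x',y'\in\ZZ_b$ and discarding the terms divisible by $b^k$. In other words, $\tau_k$ induces the canonical ring isomorphism $\ZZ_b/b^k\ZZ_b\cong\ZZ/b^k\ZZ$. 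Consequently $\tau_k(ax_n+c)\equiv\tau_k(a)\tau_k(x_n)+\tau_k(c)\pmod{b^k}$ for every $n$ and every $k\geq 1$.

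For the forward implication I would assume $a$ is a unit. By the characterization of units recalled in Section~2.1 this means $\gcd(\tau_1(a),b)=1$; since $\tau_k(a)\equiv\tau_1(a)\pmod b$, it follows that $\gcd(\tau_k(a),b^k)=1$, so $\tau_k(a)$ is invertible modulo $b^k$ and the affine map $t\mapsto\tau_k(a)\,t+\tau_k(c)$ is a permutation of $\ZZ/b^k\ZZ$. A permutation of residue classes preserves asymptotic counting frequencies, and $(\tau_k(x_n))_{n\geq0}$ is u.d.\ modulo $b^k$ by Lemma~\ref{udrelation}, so $(\tau_k(ax_n+c))_{n\geq0}$ is again u.d.\ modulo $b^k$. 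As this holds for every $k\geq1$, Lemma~\ref{udrelation} gives that $(ax_n+c)_{n\geq0}$ is u.d.\ in $\ZZ_b$.

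For the converse I would argue by contraposition, testing only the level $k=1$. Assume $a$ is not a unit, so that $g:=\gcd(\tau_1(a),b)>1$. Then $\tau_1(ax_n+c)\equiv\tau_1(a)\tau_1(x_n)+\tau_1(c)\pmod b$, and as $\tau_1(x_n)$ ranges over $\ZZ/b\ZZ$ the right-hand side stays inside the coset $\tau_1(c)+g\,(\ZZ/b\ZZ)$, since the subgroup generated by $\tau_1(a)$ in $\ZZ/b\ZZ$ is exactly the set of multiples of $g$. This coset contains only $b/g<b$ residues, so at least one residue class modulo $b$ is never attained; its counting frequency is $0\neq1/b$, and hence $(\tau_1(ax_n+c))_{n\geq0}$ fails to be u.d.\ modulo $b$. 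By Lemma~\ref{udrelation}, $(ax_n+c)_{n\geq0}$ is then not u.d.\ in $\ZZ_b$, which establishes the equivalence.

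The only genuinely delicate points are the homomorphism property of $\tau_k$ modulo $b^k$ and the passage $\gcd(\tau_1(a),b)=1\Rightarrow\gcd(\tau_k(a),b^k)=1$; everything else is the elementary dichotomy that an invertible affine transformation permutes the residue classes while a non-invertible one collapses them onto a proper subgroup-coset, combined with Lemma~\ref{udrelation}. I do not anticipate any serious obstacle beyond keeping the truncations bookkept carefully.
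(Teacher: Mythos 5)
Your proof is correct. Note, however, that the paper does not prove this lemma at all: it is quoted verbatim from Meijer's work (Theorem~3 of \cite{Meijer}) and used as a black box, so there is no in-paper argument to compare against. Your route --- reducing to uniform distribution of $(\tau_k(x_n))_{n\geq0}$ modulo $b^k$ via Lemma~\ref{udrelation} and then observing that $t\mapsto\tau_k(a)t+\tau_k(c)$ permutes $\ZZ/b^k\ZZ$ exactly when $a$ is a unit --- is the natural one and is consistent with the machinery the paper sets up in Section~2 (in particular the characterization of units by $\gcd(\tau_1(a),b)=1$). The one step I would make explicit is the claim that $\tau_k$ induces a ring isomorphism $\ZZ_b/b^k\ZZ_b\cong\ZZ/b^k\ZZ$: after writing $x=\tau_k(x)+b^kx'$ with $x'\in\ZZ_b$, you still need that a rational integer lying in $b^k\ZZ_b$ is divisible by $b^k$ in $\ZZ$. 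For composite $b$ this requires a short computation with the pseudo-valuation of Definition~\ref{bvaldef}: $|m|_b\leq b^{-k}$ forces $p_i^{k\beta_i}\mid m$ for every prime $p_i\mid b$, hence $b^k\mid m$. With that supplied, both directions (the permutation argument for $a$ a unit, and the collapse onto the coset $\tau_1(c)+g(\ZZ/b\ZZ)$ with $g=\gcd(\tau_1(a),b)>1$ otherwise) are complete and correct.
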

Consequently, $(an+c)_{n\geq0}$ is u.d.\ in $\ZZ_{b}$, if $a$ is a unit.

\begin{example}\label{examp:square}
Let $a,c,d$ be $b$-adic integers such that $|a|_b<1$ and $c$ is a unit. Then the sequence $(an^2+cn+d)_{n\geq 0}$ is uniformly distributed in $\ZZ_b$. (See \cite[Theorem~5]{Meijer}). Furthermore the sequence $(n^2)_{n\geq 0}$ is not uniformly distributed in $\ZZ_b$, which can be seen by quadratic residues modulo $b^k$. 
\end{example}

\subsection{The $b$-adic representation of $b$-adic integers}

Obviously the $b$-adic representation of a nonnegative integer corresponds to its base $b$
digit expansion. This is not true for the negative integers, as is shown, e.g., by the $b$-adic 
representation of $-1=\sum_{i=0}^\infty (b-1)b^i$. We recall the general situation and add
some further details:

\begin{lemma}\label{lem:1}Let $n$ be a positive (rational or $b$-adic) integer. 
The $b$-adic representation of $-n$ is related to the $b$-adic representation (or expansion) of
 $n=\sum_{i=r}^\infty a_ib^i; r\in\NN_0,\,a_r\neq0$ via 
$$-n=(b-a_r)b^r+\sum_{i=r+1}^\infty (b-1-a_i)b^{i} .$$
(where $r=\min\{r'\in\NN_0:a_{r'}\neq 0\}$, i.e., $|n|_{b}=|a_{r}|_{b}b^{-r}.$) 

Let $k,l\in\NN_0$ and $M=\{-n:kb^l<n\leq (k+1)b^l\}$. 
The $b$-adic integers in $M$ share the same digits in their $b$-adic representations
from index $l$ on and run through all possible values in their first $l$ digits

We denote the $b$-adic representation of a $b$-adic integer $z$ by $z=\sum_{i=0}^\infty \overline{a}_ib^i$.
Then $(\overline{a}_l,\overline{a}_{l+1},\overline{a}_{l+2},\ldots)\in Z_b^\infty$ is equal for every integer in $M$ and $\#\{(\overline{a}_0,\ldots,\overline{a}_{l-1}):z\in M\}=\#M=b^l$. 
(i.e., $M$ is the equivalent of a $b$-adic block in negative $b$-adic integers).
\end{lemma}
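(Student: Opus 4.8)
The plan is to treat the two assertions separately: first establish the explicit digit formula for $-n$, and then use a clean additive reformulation (rather than the formula itself) to analyze the block $M$.

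For the formula for $-n$, I would verify it directly by adding the proposed series to $n$ and checking that the sum vanishes in $\ZZ_b$. Writing $n = \sum_{i=r}^\infty a_i b^i$ with $a_r \neq 0$, the term-by-term sum with $(b-a_r)b^r + \sum_{i=r+1}^\infty (b-1-a_i)b^i$ produces coefficient $b$ at position $r$ and coefficient $b-1$ at every position $i > r$, giving $b^{r+1} + (b-1)\sum_{i=r+1}^\infty b^i$. The essential ingredient is the $b$-adic geometric series $\sum_{i=0}^\infty (b-1)b^i = -1$, equivalently $\sum_{i=0}^\infty b^i = -(b-1)^{-1}$, from which $(b-1)\sum_{i=r+1}^\infty b^i = -b^{r+1}$, so the total is $0$. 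Before invoking uniqueness one must check the proposed digits are admissible: since $1 \leq a_r \leq b-1$ we have $b - a_r \in \{1,\ldots,b-1\}$, while $b-1-a_i \in \{0,\ldots,b-1\}$ for $i > r$; hence observation (4) of Section 2 guarantees this is \emph{the} $b$-adic representation of $-n$.

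For the block statement I would avoid applying the digit formula directly (its leading index $r$ varies with $n$) and instead reparametrize. Setting $t := (k+1)b^l - n$, as $n$ runs over the $b^l$ integers in $(kb^l,(k+1)b^l]$ the parameter $t$ runs over $\{0,1,\ldots,b^l-1\}$, and $-n = t - (k+1)b^l$. Expanding $-(k+1) = \sum_{i=0}^\infty e_i b^i$ in its $b$-adic representation, the term $-(k+1)b^l = \sum_{i \geq l} e_{i-l} b^i$ has only zero digits in positions $0,\ldots,l-1$, while $t = \sum_{i=0}^{l-1} t_i b^i$ has only zero digits in positions $\geq l$. Since the supports of nonzero digit positions are disjoint and every digit already lies in $\{0,\ldots,b-1\}$, the addition proceeds without carries, yielding
$$-n = \sum_{i=0}^{l-1} t_i b^i + \sum_{i=l}^\infty e_{i-l} b^i.$$
Thus the tail $(\overline{a}_l,\overline{a}_{l+1},\ldots) = (e_0,e_1,\ldots)$ is exactly the $b$-adic representation of $-(k+1)$, independent of $n$, whereas the first $l$ digits equal those of $t$. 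As $t$ ranges over a complete residue system modulo $b^l$, the tuple $(\overline{a}_0,\ldots,\overline{a}_{l-1})$ attains all $b^l$ values, and since $n \mapsto -n$ is injective we obtain $\#\{(\overline{a}_0,\ldots,\overline{a}_{l-1}) : z \in M\} = \#M = b^l$.

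I expect the main obstacle to be the careful justification of the carry-free addition, which is precisely where the disjointness of the two digit supports is combined with the uniqueness from observation (4); everything else reduces to the single geometric-series identity and a counting argument. One small point to watch is the boundary value $t = 0$ (that is, $n = (k+1)b^l$), for which $-n = -(k+1)b^l$ and the first $l$ digits are all zero; this case is consistent with the formula and must be retained to obtain the full count $b^l$.
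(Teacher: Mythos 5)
Your proof is correct. For the first assertion your argument is essentially the paper's: you add the two series and observe that the partial sums telescope to $b^{N+1}\to 0$ (phrased via $(b-1)\sum_{i\ge 0}b^i=-1$, which is the same computation as the paper's addition of the truncations $\tau_k(n)+\tau_k(-n)=b^k$), plus the admissibility check on the digits that the paper leaves implicit. For the block statement, however, you take a genuinely different and cleaner route. The paper applies the digit formula for $-n$ to each $n$ in the block, which forces a case split between $r(n)<l$ (where the digit map $a_i\mapsto b-1-a_i$ acts position-wise and is seen to be injective on the first $l$ digits) and the boundary case $n=(k+1)b^l$, for which an explicit carry-propagation computation is needed to show the trailing digits agree. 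Your reparametrization $-n=t-(k+1)b^l$ with $t\in\{0,\dots,b^l-1\}$ sidesteps both: the digits of $t$ and of $-(k+1)b^l$ occupy disjoint index ranges and already lie in $Z_b$, so uniqueness of the $b$-adic representation identifies the tail with the digits of $-(k+1)$ and the head with the digits of $t$, and the bijection onto $Z_b^l$ is immediate. What the paper's approach buys in exchange for the case analysis is the explicit complementation rule relating the digits of $n$ and $-n$ across the whole block, which your parametrization deliberately hides; what yours buys is a shorter, carry-free argument in which the boundary value $t=0$ is no longer exceptional.
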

\begin{proof}
The first part is easily verified by simply adding $\tau_k(n)$ and $\tau_k(-n)$ 
(i.e., insert the given representation for $-n$) and observing
that the sum, $b^{k}$, converges to $0$ in the $b$-adic metric as $k$ goes to infinity, while the
summands converge to $n$ and $-n$, respectively.

Regarding the second part first note that for $n$ in the given range
we always have $r=r(n)\leq l$ and $r=l$ iff $n= (k+1)b^l$. We denote the $b$-adic 
expansion (or representation) of $k$ by $k=\sum_{i\geq 0} k_ib^i$.

For $r<l$ it is evident that the digits of all $n$ in the given range 
are constant from index $l$ onwards and in fact equal to $k_l,k_{l+1},\dots$~\ . 
Thus the digits of $-n$ with index at least $l$ are also
constant and equal to $b-1-k_l,b-1-k_{l+1},\dots$~\ . Furthermore the mapping between the
(full range of) digits of $n$ and $-n$ is self-inverse. In particular it is injective on the
first $l$ digits. There is however no $n$ with $r(n)<l$ such that all of the first $l$ digits are
$0$. Therefore the statement is proved, if we can show that the remaining case, $r=l$,
$n=(k+1)b^l$, maps to the zero vector in the first $l$ digits and, more importantly, to the
same trailing digits.

For this case we set $r'\in\NN_0$ such that $k_{l}=k_{l+1}=\dots=k_{l+r'-1}=b-1\neq k_{l+r'}$
(where the case $r'=0$ may occur if $k_l\neq b-1$). Then, obeying the carry,
\[(k+1)b^l = (k_l+1)b^l+\sum_{i>0}k_{l+i}b^{l+i}= (k_{l+r'}+1)+\sum_{i>0}k_{l+r'+i}b^{l+r'+i},\]
so the last right hand side is again a valid expansion (representation). 
By our proposed formula this maps to
\[ (b-(k_{l+r'}+1))b^{l+r'}+\sum_{i>0}(b-1-k_{l+r'+i})b^{l+r'+i} = \sum_{i\geq0}(b-1-k_{l+i})b^{l+i},\]
giving the desired form and concluding the argument.

\end{proof}

\section{Relations between Algorithms \ref{algo:1} and \ref{algo:2}}

\begin{theorem}\label{thm:ud1} Let $s$ be a dimension, $R=\FF_q$ be a finite field with cardinality $q$, and $C^{(1)},\ldots,C^{(s)}\in \FF_q^{\NN\times \NN_0}$ be finite-row generating matrices. Let $(s_n)_{n\geq 0}$ be a uniformly distributed sequence in $\ZZ_q$. 

If $C^{(1)},\ldots,C^{(s)}$ generate a uniformly distributed sequence via Algorithm \ref{algo:1}, then Algorithm \ref{algo:2} based on these matrices and the sequence $(s_n)_{n\geq 0}$ gives a uniformly distributed sequence in $[0,1]^s$. 
\end{theorem}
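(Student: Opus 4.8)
The plan is to verify uniform distribution directly, by showing that for every elementary interval in base $q$ the empirical frequency of points landing in it converges to its volume. A sequence in $[0,1]^s$ is u.d.\ if and only if the empirical measure of every box anchored at the origin converges to its Lebesgue measure, and any such box can be sandwiched between finite unions of base-$q$ elementary intervals whose total volumes differ by an arbitrarily small amount; hence it suffices to treat elementary intervals. So fix $I=\prod_{i=1}^s[a_iq^{-d_i},(a_i+1)q^{-d_i})$ with $d_i\geq 0$, put $D:=d_1+\cdots+d_s$, and aim to prove $A(I;\{\bsx_n\}_{n<N})/N\to \vol(I)=q^{-D}$.

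First I would translate membership $\bsx_n\in I$ into a linear system over $\FF_q$. Prescribing the leading $d_i$ base-$q$ digits of $\bsx_n^{(i)}$ is, via the bijections $\lambda_{i,j}$, equivalent to requiring $y_{n,j}^{(i)}=\sum_{r\geq 0}c^{(i)}_{j,r}\psi_r(a_r)=\eta^{(i)}_j$ for prescribed targets $\eta^{(i)}_j\in\FF_q$, where $1\leq j\leq d_i$, $1\leq i\leq s$, and $a_0,a_1,\ldots$ are the $q$-adic digits of $s_n$. By the finite-row property I may fix $R$ with $c^{(i)}_{j,r}=0$ for all $r\geq R$ across the finitely many rows occurring here; consequently this system depends only on $\psi_0(a_0),\ldots,\psi_{R-1}(a_{R-1})$, that is, only on the first $R$ digits of $s_n$, i.e.\ on $\tau_R(s_n)\bmod q^R$.

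Next I would count the digit-tuples that satisfy the system. The hypothesis that the matrices generate a u.d.\ sequence through Algorithm~\ref{algo:1} is precisely Condition~\ref{cond:ud}, so the rows $\bsc^{(i)}_j=(c^{(i)}_{j,r})_{r\geq 0}$ with $1\leq j\leq d_i$, $1\leq i\leq s$ are linearly independent over $\FF_q$; being supported in the first $R$ columns, their $R$-truncations form a matrix of full rank $D$. Thus the $\FF_q$-linear map $\FF_q^R\to\FF_q^D$ sending $(\psi_r(a_r))_{0\leq r<R}$ to $(y^{(i)}_{n,j})$ is surjective, and the prescribed target $(\eta^{(i)}_j)$ has exactly $q^{R-D}$ preimages. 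Since every $\psi_r$ is a bijection, exactly $q^{R-D}$ digit-tuples $(a_0,\ldots,a_{R-1})\in Z_q^R$ — equivalently exactly $q^{R-D}$ residues modulo $q^R$, forming a fixed set $S$ — place $\bsx_n$ in $I$.

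Finally I would bring in the equidistribution of the index sequence. Because $\bsx_n\in I$ holds exactly when $\tau_R(s_n)\bmod q^R\in S$, and because $(s_n)_{n\geq 0}$ u.d.\ in $\ZZ_q$ forces, by Lemma~\ref{udrelation}, the sequence $(\tau_R(s_n))_{n\geq 0}$ to be u.d.\ modulo $q^R$, each residue class occurs with limiting frequency $q^{-R}$. Hence $A(I;\{\bsx_n\}_{n<N})/N\to |S|\,q^{-R}=q^{R-D}q^{-R}=q^{-D}=\vol(I)$, completing the argument once this is known for all $I$. I expect the main things to watch to be the two reductions that make the problem finite-dimensional — passing from linear independence of the infinite rows to full rank of their $R$-truncations, and verifying that membership in $I$ genuinely factors through $\tau_R(s_n)$; the outer approximation by elementary intervals and the closing count are then routine.
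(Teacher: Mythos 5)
Your proposal is correct and follows essentially the same route as the paper's proof: reduce to elementary intervals, use the finite-row property to make membership in $I$ depend only on $\tau_L(s_n)$ for some finite $L$, invoke Condition~\ref{cond:ud} to count exactly $q^{L-(d_1+\cdots+d_s)}$ admissible residue classes modulo $q^L$, and conclude via Lemma~\ref{udrelation} and the uniform distribution of $(s_n)_{n\geq 0}$ in $\ZZ_q$. You are somewhat more explicit than the paper about the rank argument and about why elementary intervals suffice, but the substance is identical.
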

\begin{proof}
To prove the uniform distribution we show that every elementary interval contains the correct portion of points in the limit. Let 
$$I=\prod_{i=1}^s\left[\frac{a_i}{q^{d_i}},\frac{a_i+1}{q^{d_i}}\right)$$
with nonnegative integers $d_1,\ldots,d_s$ (not all zero) and $a_1<q^{d_1},\ldots,a_s<q^{d_s}$. The finite-row property ensures that there exists an $L$ such that $c^{(i)}_{j,r}=0$ for all $i=1,\ldots,s$, $1\leq j\leq d_i$ and $r\geq L$. Hence $\tau_L(s_n)$ determines whether $\bsx_n$ of Algorithm \ref{algo:2} is included in $I$ or not.  Since Condition~\ref{cond:ud} holds true we know that exactly $q^{L-(d_1+\cdots +d_s)}$ residue classes $r_1,\ldots,r_{q^{L-(d_1+\cdots +d_s)}}$ modulo $q^L$ correspond to the elementary interval $I$. The uniform distribution of $(s_n)_{n\geq 0}$ in $\ZZ_q$ ensures that $(\tau_L(s_n))_{n\geq 0}$ is uniformly distributed modulo $q^L$. Therefore 
\begin{align*}
\frac{A(N;I)}{N}-\lambda(I)&=\\
\frac1N\!\! \sum_{k=1}^{q^{L-(d_1+\cdots +d_s)}}\hspace{-.7cm}
\#\{0\leq n<N:\tau_L(s_n)&\equiv r_k\pmod{q^L}\}
-\frac{1}{q^{d_1+\cdots +d_s}}\\
\intertext{and thus}
\lim_{N\to\infty}\left(\frac{A(N;I)}{N}-\lambda(I)\right)&=q^{L-(d_1+\cdots +d_s)}\frac{1}{q^L}-\frac{1}{q^{d_1+\cdots +d_s}}=0.
\end{align*}
\end{proof}

The converse of Theorem~\ref{thm:ud1} is not true. Let $s_n=n^2$ then the sequence $(s_n)_n\geq 0$ is not uniformly distributed in $\ZZ_q$ by Example~\ref{examp:square}. But the one-dimensional sequence generated by the finite-row matrix
$$C^{(1)}= \begin{pmatrix}
1&1&0&0&0&0&0&0&\dots \\
0&0&1&1&0&0&0&0&\dots \\
0&0&0&0&1&1&0&0&\dots \\
\vdots&\vdots&\vdots&\vdots&\vdots&\vdots&\vdots&\vdots&\vdots&\ddots 
\end{pmatrix}\in\FF_2^{\NN\times\NN_0}$$ 
via Algorithm~\ref{algo:2} is a uniformly distributed digital sequence in base $2$ (see \cite{HoLaZe}). In the next theorem we will find conditions on the generating matrices such that the uniform distribution of $(s_n)_{n\geq 0}$ is sufficient and necessary for the uniform distribution of the digital sequence. 
For stating the next result we introduce the term \emph{optimal row-lengths} for the generating 
matrices. Let $C_1,\ldots,C_s\in\FF_q^{\NN\times \NN_0}$ be finite-row generating matrices. We 
measure the length of a row $\bsc^{(i)}_j$ by $\sup\{r\in\NN_0:c^{(i)}_{j,r}\neq 0\}+1$. In 
\cite{hoflar} it was shown that the uniform distribution of the sequence implies that for every $j$ 
there exists $i\in\{1,\ldots,s\}$ such that the length of $\bsc_j^{(i)}$ is at least $s j$. 
Therefore we say 
that $C_1,\ldots,C_s\in\FF_q^{\NN\times \NN_0}$ have \emph{optimal row-lengths} if for all 
$i\in\{1,\ldots,s\},\,j\geq 1$ the length of $\bsc_j^{(i)}$ is \emph{at most} $s j$.

\begin{theorem}\label{thm:ud2}
Let $s>0$, $R=\FF_q$ be a finite field with cardinality $q$, and $C^{(1)},\ldots,$ $C^{(s)}\in \FF_q^{\NN\times \NN_0}$ be finite-row generating matrices having optimal row-lengths and yielding a $(\bsT,s)$-sequence with optimal quality parameter $\bsT\equiv 0$ via Algorithm~\ref{algo:1}. Let $(s_n)_{n\geq 0}$ be a sequence in $\ZZ_q$. 
Then Algorithm~\ref{algo:2} produces a uniformly distributed sequence if and only if $(s_n)_{n\geq 0}$ is uniformly distributed in $\ZZ_q$. 
\end{theorem}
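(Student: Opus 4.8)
The plan is to treat the two implications separately, with essentially all of the work falling on the ``only if'' direction. For the ``if'' direction I would invoke Theorem~\ref{thm:ud1}: the matrices yield a $(\bsT,s)$-sequence with $\bsT\equiv 0$, that is a $(0,s)$-sequence, which is uniformly distributed. By the equivalence recorded after Condition~\ref{cond:ud}, the statement that Algorithm~\ref{algo:1} produces a uniformly distributed sequence is exactly Condition~\ref{cond:ud} (alternatively, full-row linear independence follows from Condition~\ref{cond:1} by taking $m$ larger than every relevant row length, using the finite-row property). Hence the hypotheses of Theorem~\ref{thm:ud1} are met, and $(s_n)_{n\geq0}$ u.d.\ in $\ZZ_q$ immediately gives a uniformly distributed output.

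For the converse I would use Lemma~\ref{udrelation} to reduce the goal to showing that $(\tau_m(s_n))_{n\geq0}$ is u.d.\ modulo $q^m$ for every $m\geq1$. The heart of the argument is a bijection produced by the two optimality assumptions acting together. Fix $k\geq1$ and consider the elementary intervals with $d_1=\cdots=d_s=k$; there are $q^{sk}$ of them, partitioning $[0,1)^s$ into congruent cubes of volume $q^{-sk}$. The rows governing the first $k$ output digits of all $s$ coordinates are the $\bsc_j^{(i)}$ with $1\leq j\leq k$, $1\leq i\leq s$, a total of $sk$ rows. By optimal row-lengths each such row has length at most $sj\leq sk$, so all of them are confined to columns $0,\dots,sk-1$, and the associated matrix over $\FF_q$ is genuinely $sk\times sk$. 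Invoking Condition~\ref{cond:1} with $m=sk$ and $d_1+\cdots+d_s=sk$ (admissible since $\bsT\equiv0$ gives $m-\bsT(m)=sk$), this square matrix has full rank $sk$ and is therefore invertible.

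Since $y^{(i)}_{n,j}=\sum_{r=0}^{sj-1}c^{(i)}_{j,r}\psi_r(a_r)$ for $j\leq k$ depends only on the digits $a_0,\dots,a_{sk-1}$ of $s_n$, i.e.\ on $\tau_{sk}(s_n)$, and since the $\psi_r$ and $\lambda_{i,j}$ are bijections, invertibility of the $sk\times sk$ matrix upgrades the assignment $\tau_{sk}(s_n)\mapsto(\text{first }k\text{ digits of }\bsx^{(1)}_n,\dots,\bsx^{(s)}_n)$ to a bijection between the $q^{sk}$ residue classes modulo $q^{sk}$ and the $q^{sk}$ cubes above. Thus $\bsx_n$ lies in a prescribed cube if and only if $\tau_{sk}(s_n)$ equals a uniquely determined residue modulo $q^{sk}$, and the two counting functions coincide exactly. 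Uniform distribution of the generated sequence forces each cube to receive asymptotic frequency $q^{-sk}$, so by the bijection $(\tau_{sk}(s_n))_{n\geq0}$ is u.d.\ modulo $q^{sk}$ for every $k$. Finally, u.d.\ modulo $q^{sk}$ implies u.d.\ modulo $q^m$ for all $m\leq sk$ (each residue mod $q^m$ splits into $q^{sk-m}$ residues mod $q^{sk}$), and $\tau_{sk}(s_n)\bmod q^m=\tau_m(s_n)$; letting $k\to\infty$ yields $(\tau_m(s_n))_{n\geq0}$ u.d.\ modulo $q^m$ for every $m$, and Lemma~\ref{udrelation} closes the argument.

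The main obstacle --- and the place where both optimality hypotheses are indispensable --- is establishing that the $sk\times sk$ system is simultaneously \emph{square}, \emph{confined to the first $sk$ columns}, and \emph{invertible}. Optimal row-lengths ($\leq sj$) are precisely what guarantee that the $sk$ relevant rows never reach beyond column $sk-1$, so that the leading output digits depend on $s_n$ only through $\tau_{sk}(s_n)$ and no higher digit can leak in to break the bijection; the optimal quality parameter $\bsT\equiv0$ is precisely what promotes this square matrix from ``full rank after truncation'' to genuine invertibility. I would be careful to confirm that the admissibility bound $1\leq d_1+\cdots+d_s\leq m-\bsT(m)=m$ in Condition~\ref{cond:1} is met here with equality, which is the tightest permissible case and exactly the one the construction forces.
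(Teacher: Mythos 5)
Your proposal is correct and follows essentially the same route as the paper's own proof: sufficiency via Theorem~\ref{thm:ud1}, and necessity by restricting to the $q^{sk}$ congruent cubes with $d_1=\cdots=d_s=k$, using the optimal row-lengths to confine the relevant rows to the first $sk$ columns and $\bsT\equiv 0$ to make the resulting $sk\times sk$ matrix invertible, so that membership in a cube is equivalent to $\tau_{sk}(s_n)$ lying in a single residue class modulo $q^{sk}$, after which Lemma~\ref{udrelation} finishes the argument. The only cosmetic difference is that you spell out the bijection and the propagation to lower powers of $q$ slightly more explicitly than the paper does.
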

\begin{proof}
Sufficiency of the uniform distribution of $(s_n)_{n\geq 0}$ in $\ZZ_q$ follows from Theorem~\ref{thm:ud1}. To prove necessity we regard elementary intervals of the following form 
\begin{equation}\label{equ:prnec}
I=\prod_{i=1}^s\left[\frac{a_i}{q^d},\frac{a_i+1}{q^d}\right),
\end{equation}
where $d\in\NN$ and $0\leq a_i<q^{d}$ for $i=1,\ldots,s$. We denote the base $q$ representation of $\frac{a_i}{q^d}$ by $$\frac{a_i}{q^d}=\frac{a_{i,1}}{q}+\frac{a_{i,2}}{q^2}+\cdots +\frac{a_{i,d}}{q^d}$$ and the $q$-adic representation of $s_n$, for some fixed $n$,
by $$s_n=\sum_{r=0}^\infty \overline{a}_rq^r.$$
The optimal row-lengths yield the equivalence of $\bsx_n\in I$ and 
$$ 
\underbrace{\begin{pmatrix}
c^{(1)}_{1,0} & \cdots & c^{(1)}_{1,ds-1} \\
\vdots & & \vdots  \\
c^{(1)}_{d,0} & \cdots & c^{(1)}_{d_1,ds-1} \\
\vdots & & \vdots  \\
c^{(s)}_{1,0}  & \cdots & c^{(s)}_{1,ds-1} \\
\vdots & & \vdots  \\
c^{(s)}_{d,0} & \cdots  & c^{(s)}_{d_s,ds-1}
\end{pmatrix}}_{:=\overline{C}}
\begin{pmatrix}\psi_0(\overline{a}_0)\\\vdots\\\psi_{ds-1}(\overline{a}_{ds-1}) 
\end{pmatrix}
= \begin{pmatrix}\lambda_{1,1}^{-1}(a_{1,1})\\\vdots \\ \lambda_{1,d_1}^{-1}(a_{1,d})\\ \vdots \\ \lambda_{s,1}^{-1}(a_{s,1})\\\vdots \\ \lambda_{s,d_s}^{-1}(a_{s,d} ) 
\end{pmatrix}  \in\FF_q^{ds},
$$
with the maps $\psi_{i},\lambda_{i,j}$ of Algorithm \ref{algo:1}.
Note that $\overline{C}$ is a square matrix with full row-rank $ds$, and that there are $q^{ds}$ elementary intervals of the form \eqref{equ:prnec} which are in one to one correspondence with the elements in $\FF_q^{ds}$. Further, note that the value of $\tau_{sd}(s_n)$ determines whether $\bsx_n$ is included in $I$ or not. Now the uniform distribution of $(\bsx_n)_{n\geq 0}$ ensures that $(\tau_{sd}(s_n))_{n\geq 0}$ is uniformly distributed modulo $q^{sd}$. Since this is valid for every $d\in\NN$, we have that $(\tau_{k}(s_n))_{n\geq 0}$ is uniformly distributed modulo $q^{k}$ for every $k\in\NN$  (clearly, uniform distribution propagates to lower powers). Finally by Lemma 
\ref{udrelation} this is equivalent to $(s_n)_{n\geq 0}$ being uniformly distributed in $\ZZ_q$.
\end{proof}

\begin{example}
Constructions and examples of generating matrices having optimal row-lengths and satisfying Condition~\ref{cond:1} for $\bsT\equiv 0$ are given e.g. in \cite{hoflar,HofPir11,hoFFA}. 

One specific  example of suitable matrices are the Stirling matrices, constructed in
analogy to the classical Pascal or Faure matrices but with Stirling numbers
of the first kind replacing the binomials. Depicted here are matrices in
base 5. The finite row length is clearly visible. The apparent fractal
structure and other aspects are explored in more detail in \cite{HofPir11}.

\noindent
\includegraphics[width=\textwidth]{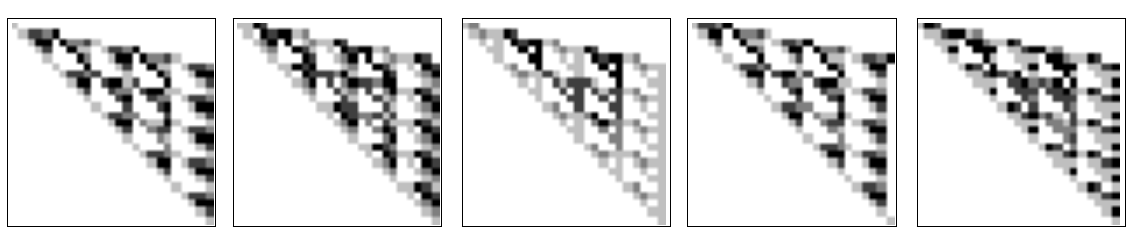}

\end{example}

\begin{remark}
Theorem~\ref{thm:ud2} in case of $s=1$ and $C^{(1)}$ chosen to be the identity matrix represents a generalization of the one-dimensional case of \cite[Theorem~4.2.(iv)]{HelNie}. 
\end{remark}

\section{Discrepancy and quality-parameter function $\bsT$ for Algorithm~\ref{algo:2}
using some specific $(s_n)_{n\geq 0}$}

The probably most basic setting for $(s_n)_{n\geq 0}$ is to choose $s_n=n$. Trivially,
{in this case} Condition \ref{cond:1} is also qualified to determine the quality parameter or function,
 $t$ or $\bsT$, respectively, of the sequence constructed by Algorithm \ref{algo:2}. We next look at 
 negative integers as the underlying sequence.

\begin{prop}[The case $s_n=-n-1$ :]\label{prop:1} Let $s$ be a dimension, $q$ be a prime power, $R=\FF_q$, and $C^{(1)},\ldots,C^{(s)}\in\FF_q^{\NN\times \NN_0}$ be finite-row generating matrices. 

If $C^{(1)},\ldots,C^{(s)}$ generate a $(\bsT,s)$-sequence in base $q$ via Algorithm \ref{algo:1}, then Algorithm \ref{algo:2} based on these matrices and the sequence $(s_n)_{n\geq 0}=(-n-1)_{n\geq 0}$ gives a $(\bsT,s)$-sequence in base $q$. 
\end{prop}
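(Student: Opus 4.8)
The plan is to reduce the statement about the sequence $(-n-1)_{n\geq0}$ to the already-understood behavior of the nonnegative integers $(n)_{n\geq0}$, exploiting the net-by-net structure of a $(\bsT,s)$-sequence. Recall that to verify the $(\bsT,s)$-sequence property we must show that for every $m$ with $\bsT(m)<m$ and every $k\geq 0$, the truncated points $[\bsx_n]_{q,m}$ with $kq^m\leq n<(k+1)q^m$ form a $(\bsT(m),m,s)$-net in base $q$. The key observation is that in Algorithm~\ref{algo:2} each point $\bsx_n$ depends on $s_n=-n-1$ only through the first $m$ digits of its $q$-adic representation, once we truncate to $m$ output digits and invoke the finite-row property to bound the relevant column index. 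So everything hinges on understanding which $q$-adic digit-blocks of length $m$ are traversed as $n$ runs over a block $kq^m\leq n<(k+1)q^m$.

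First I would apply Lemma~\ref{lem:1}. Writing the block of indices as $n$ ranging over $kq^m\leq n<(k+1)q^m$, I set $n'=n+1$ so that $s_n=-n'$ with $kq^m< n'\leq (k+1)q^m$, which is exactly the set $M$ of Lemma~\ref{lem:1} with $l=m$. The lemma tells us that the $q$-adic integers $\{-n':kq^m<n'\leq(k+1)q^m\}$ all share the same digits from index $m$ onward and their first $m$ digits $(\overline a_0,\ldots,\overline a_{m-1})$ run through \emph{all} $q^m$ possible values exactly once. This is the crucial input: the digit-block structure of the negative indices over one block is a permutation of the digit-block structure of the nonnegative integers over one block. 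Because the first $m$ output digits of $\bsx_n$ (via the finite-row matrices and the generating map \eqref{equ:def1}) are a fixed function of $(\overline a_0,\ldots,\overline a_{m-1})$ alone — the shared higher digits contribute a constant offset that factors out of the net-defining equations over $\FF_q$ — the multiset of truncated points $\{[\bsx_n]_{q,m}\}$ produced by the negative block equals the multiset produced by \emph{some} nonnegative block (indeed, up to an affine shift determined by the constant tail digits).

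The remaining step is to transfer the net property across this correspondence. Since $C^{(1)},\ldots,C^{(s)}$ generate a $(\bsT,s)$-sequence via Algorithm~\ref{algo:1}, we know that for the nonnegative indices $kq^m\leq n<(k+1)q^m$ the truncated points form a $(\bsT(m),m,s)$-net. The net property is a statement purely about how many points fall in each elementary interval of volume $q^{\bsT(m)-m}$, and this count is invariant under the permutation of digit-blocks supplied by Lemma~\ref{lem:1}; the constant tail-digit contribution shifts all points by a fixed element, which merely permutes the elementary intervals among themselves and hence preserves the defining counting condition. I would make this precise by translating ``$\bsx_n\in I$'' into a linear system over $\FF_q$ in the digit vector $(\psi_0(\overline a_0),\ldots,\psi_{m-1}(\overline a_{m-1}))$, exactly as in the proof of Theorem~\ref{thm:ud2}, and noting that replacing the nonnegative digit-block by the negative one only changes the right-hand side by a constant, leaving the solution-count unchanged.

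The main obstacle I anticipate is bookkeeping rather than conceptual: one must carefully handle the constant contribution of the shared trailing digits $(\overline a_m,\overline a_{m+1},\ldots)$ to the truncated output $[\bsx_n]_{q,m}$, and confirm that, after the finite-row cutoff, this contribution really is a \emph{single fixed} offset for the whole block and does not interfere with the first $m$ output digits in a way that depends on the varying low digits. Equivalently, I must check that truncation to $m$ digits commutes appropriately with the block decomposition — that the digits beyond index $m$ of $s_n$, though nonzero (unlike in the nonnegative case), only ever feed into output coordinates through the same fixed columns, so their effect is an overall translation of the net. Once this offset is shown to permute elementary intervals bijectively, the $(\bsT(m),m,s)$-net property descends immediately from Algorithm~\ref{algo:1}, and since this holds for every $k$ and every admissible $m$, the sequence from Algorithm~\ref{algo:2} is a $(\bsT,s)$-sequence in base $q$.
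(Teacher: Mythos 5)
Your proposal is correct and follows essentially the same route as the paper: both reduce membership $[\bsx_n]_{q,m}\in I$ to a linear system over $\FF_q$, invoke Lemma~\ref{lem:1} (with the shift $n'=n+1$) to see that over a block the trailing digits of $-n-1$ are constant — contributing only a fixed right-hand-side offset thanks to the finite-row property — while the first $m$ digits exhaust $Z_q^m$, and then conclude via the rank condition that the solution count is unaffected by the constant shift. The only cosmetic difference is that you phrase the final step as transferring the net property from the nonnegative-index block, whereas the paper applies Condition~\ref{cond:1} directly; these are equivalent.
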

\begin{proof}Let $k,m$ be any nonnegative integers such that $\bsT(m)<m$. We have to prove that $[\bsx_n]_{q,m}$ with $n=kb^m,kb^m+1,\ldots,kb^m+b^m-1$ forms a $(\bsT(m),m,s)$-net in base $q$. 
We regard an elementary interval of the form 
$$I=\prod_{i=1}^s\left[\frac{a_i}{q^{d_i}},\frac{a_i+1}{q^{d_i}}\right)$$
with integers $d_i\geq 0,\,0\leq a_i<q^{d_i}$ for $i=1,\ldots,s$ satisfying $d_1+\cdots+d_s=m-\bsT(m)$. We use the base $q$ digit expansion  
${a_i}/{q^{d_i}}=\sum_{j=1}^\infty \frac{a_{i,j}}{q^j}$.

Regarding Algorithm \ref{algo:2} we see that $[\bsx_n]_{q,m}\in I$
(for some fixed $n$) if and only if 
\begin{align*}
\begin{pmatrix}
c^{(1)}_{1,0} & c^{(1)}_{1,1} & \cdots \\
\vdots & \vdots &  \\
c^{(1)}_{d_1,0} & c^{(1)}_{d_1,1} & \cdots \\
\vdots & \vdots &  \\
c^{(s)}_{1,0} & c^{(s)}_{1,1} & \cdots \\
\vdots & \vdots &  \\
c^{(s)}_{d_s,0} & c^{(s)}_{d_s,1} & \cdots 
\end{pmatrix}
\begin{pmatrix}\psi_0(\overline{a}_0)\\\psi_{1}(\overline{a}_1)\\\vdots 
\end{pmatrix}
=& \begin{pmatrix}\lambda_{1,1}^{-1}(a_{1,1})\\\vdots \\ \lambda_{1,d_1}^{-1}(a_{1,d_1})\\ \vdots \\ \lambda_{s,1}^{-1}(a_{s,1})\\\vdots \\ \lambda_{s,d_s}^{-1}(a_{s,d_s} )
\end{pmatrix},
\end{align*}
where we used the notation $-n-1=\sum_{r=0}^\infty \overline{a}_rq^r$. Equivalently, 
\begin{align*}&
\begin{pmatrix}
c^{(1)}_{1,0} & \cdots & c^{(1)}_{1,m-1} \\
\vdots & & \vdots  \\
c^{(1)}_{d_1,0} & \cdots & c^{(1)}_{d_1,m-1} \\
\vdots & & \vdots  \\
c^{(s)}_{1,0}  & \cdots & c^{(s)}_{1,m-1} \\
\vdots & & \vdots  \\
c^{(s)}_{d_s,0} & \cdots  & c^{(s)}_{d_s,m-1}
\end{pmatrix}
\begin{pmatrix}\psi_0(\overline{a}_0)\\\vdots\\\psi_{m-1}(\overline{a}_{m-1}) 
\end{pmatrix}=\\
=& \begin{pmatrix}\lambda_{1,1}^{-1}(a_{1,1})\\\vdots \\ \lambda_{1,d_1}^{-1}(a_{1,d_1})\\ \vdots \\ \lambda_{s,1}^{-1}(a_{s,1})\\\vdots \\ \lambda_{s,d_s}^{-1}(a_{s,d_s} ) 
\end{pmatrix}
- 
\begin{pmatrix}
c^{(1)}_{1,m} & c^{(1)}_{1,m+1} & \cdots \\
\vdots & \vdots &  \\
c^{(1)}_{d_1,m} & c^{(1)}_{d_1,m+1} & \cdots \\
\vdots & \vdots &  \\
c^{(s)}_{1,m} & c^{(s)}_{1,m+1} & \cdots \\
\vdots & \vdots &  \\
c^{(s)}_{d_s,m} & c^{(s)}_{d_s,m+1} & \cdots 
\end{pmatrix}
\begin{pmatrix}\psi_m(\overline{a}_m)\\\psi_{m+1}(\overline{a}_{m+1})\\\vdots 
\end{pmatrix}.
\end{align*}
Lemma \ref{lem:1} ensures that all terms on the right hand side are independent of $n$ for
 the range $n=kb^m,kb^m+1,\ldots,kb^m+b^m-1$ and also that the vector on the left hand side 
 spans $\FF_q^m$ as $n$ ranges through $n=kb^m,kb^m+1,\ldots,kb^m+b^m-1$ . Since Condition \ref{cond:1} holds we know that the system has exactly $q^{m-\bsT(m)}$ solutions for $n=kb^m,kb^m+1,\ldots,kb^m+b^m-1$. 
\end{proof}

Proposition~\ref{prop:1} immediately implies the following corollary. 

\begin{coro}[The case $s_n=(-1)^n\lfloor (n+1)/{2}\rfloor$ :] Let $s$ be a dimension, $q$ be a prime power, $R=\FF_q$, and $C^{(1)},\ldots,C^{(s)}\in\FF_q^{\NN\times \NN_0}$ be finite-row generating matrices. 

If $C^{(1)},\ldots,C^{(s)}$ generate a $(\bsT,s)$-sequence in base $q$ via Algorithm \ref{algo:1}, then Algorithm \ref{algo:2} with the same matrices and the underlying sequence 
$(s_n)_{n\geq 0}=((-1)^n\lfloor \frac{n+1}{2}\rfloor)_{n\geq 0}$ 
produces a sequence where the two subsequences $(\bsx_{2n})_{n\geq 0}$ and 
$(\bsx_{2n+1})_{n\geq 0}$ are $(\bsT,s)$-sequences in base $q$.

\end{coro}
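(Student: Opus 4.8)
The plan is to decouple the interleaved index sequence into its even- and odd-indexed strands and to recognize each strand as a case that has already been settled. First I would evaluate $s_n=(-1)^n\lfloor (n+1)/2\rfloor$ along the two arithmetic progressions of indices. For even indices one gets $s_{2n}=(-1)^{2n}\lfloor (2n+1)/2\rfloor=n$, and for odd indices $s_{2n+1}=(-1)^{2n+1}\lfloor (2n+2)/2\rfloor=-(n+1)=-n-1$, so that the underlying sequence is precisely the interleaving $0,-1,1,-2,2,-3,\dots$ of the nonnegative integers $(n)_{n\geq 0}$ with the negative integers $(-n-1)_{n\geq 0}$.

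For the even strand, note that $s_{2n}=n\in\NN_0$, so by item~4 of the remarks in Section~2 the $q$-adic representation of $s_{2n}$ coincides with the base-$q$ digit expansion of $n$. Consequently the point $\bsx_{2n}$ produced by Algorithm~\ref{algo:2} is literally the point that Algorithm~\ref{algo:1} assigns to the index $n$. Since by hypothesis $C^{(1)},\ldots,C^{(s)}$ generate a $(\bsT,s)$-sequence in base $q$ via Algorithm~\ref{algo:1}, the subsequence $(\bsx_{2n})_{n\geq 0}$ is exactly that $(\bsT,s)$-sequence.

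For the odd strand, the values $s_{2n+1}=-n-1$ are exactly the terms of the sequence $(-n-1)_{n\geq 0}$ analyzed in Proposition~\ref{prop:1}, and $\bsx_{2n+1}$ coincides with the $n$-th point of Algorithm~\ref{algo:2} based on these matrices and the sequence $(-n-1)_{n\geq 0}$. Proposition~\ref{prop:1} then yields that $(\bsx_{2n+1})_{n\geq 0}$ is a $(\bsT,s)$-sequence in base $q$. Combining the two observations gives the claim. The only point requiring care—rather than being a genuine obstacle—is to verify that these identifications of points are exact: that Algorithm~\ref{algo:2} fed a nonnegative integer reduces verbatim to Algorithm~\ref{algo:1}, and that the odd strand is literally the instance treated in Proposition~\ref{prop:1}. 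Both reductions rest only on the finite-row property already built into the hypotheses, so no new net-counting argument is needed beyond what Proposition~\ref{prop:1} supplies.
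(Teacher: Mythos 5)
Your proof is correct and matches the paper's intent exactly: the paper states that the corollary follows immediately from Proposition~1, and the implicit argument is precisely your decomposition into the even strand $s_{2n}=n$ (the classical Algorithm~1 case) and the odd strand $s_{2n+1}=-n-1$ (the case of Proposition~1). Nothing further is needed.
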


For stating the next result we introduce the magnitude $\Delta_b(t,m,s)$, 
which is an upper bound for $D^{*}_{N}(\mathcal P)$
holding for every $(t,m,s)$-net $\mathcal{P}$ in base $b$:
$$b^m D_{b^m}^*(\mathcal{P})\leq \Delta_b(t,m,s).$$
In this paper we do not aim to give the most precise estimate but
include for the sake of completeness exemplarily the well-known
bound of Niederreiter for $b>2$ (see \cite[Th.4.5]{niesiam}):
\[
 \Delta_b(t,m,s) = b^t \sum_{i=0}^{s-1} {s-1 \choose i}{m-t\choose i}
 \left\lfloor \frac{b}2 \right\rfloor^i.
\]

\begin{prop}[The case $s_n=n+\alpha$ :]\label{prop:2}
Let $s$ be a dimension, $q$ be a prime power, $R=\FF_q$, and $C^{(1)},\ldots,C^{(s)}\in\FF_q^{\NN\times \NN_0}$ be finite-row generating matrices satisfying the quality parameter function $\bsT:\NN_0\to\NN_0$ in Condition \ref{cond:1}. Furthermore, let $\alpha$ be a $p$-adic integer with representation 
$\alpha=\sum_{i=0}^\infty a_iq^i$ and set $(s_n)_{n\geq 0}=(n+\alpha)_{n\geq 0}$. Then the discrepancy of the first $N$ points of the sequence $\mathcal{S}$ produced by Algorithm \ref{algo:2} satisfies
\begin{align*}
ND_N(\mathcal{S}) \leq & (q-a_{0}) \Delta_q(\bsT(0),0,s)+\sum_{j=1}^{r-1}(q-1-a_j)\Delta_q(\bsT(j),j,s)\\
& \quad \quad \, + \sum_{j=0}^{r}b_j\Delta_q(\bsT(j),j,s)   ,
\end{align*}
where $r=\lfloor \log_q(N) \rfloor$ and $\sum_{j=0}^r b_jq^j$ is the base $q$ representation of $N'=N-q^r+\sum_{j=0}^{r-1}a_jq^j$. 
\end{prop}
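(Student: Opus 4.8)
The plan is to reduce the estimate to two ingredients: the sub\-additivity of the counting function, and a decomposition of the ``window'' of $q$-adic integers $\{s_n:0\le n<N\}=\{\alpha+n:0\le n<N\}$ into complete $q$-adic blocks. First I would record the elementary fact that if the index set $\{0,1,\ldots,N-1\}$ is split into disjoint groups $G_1,\ldots,G_M$, then
\[ N D_N(\mathcal S)\le \sum_{m=1}^{M}\#G_m\,D_{\#G_m}\!\left(\{\bsx_n:n\in G_m\}\right), \]
which follows because both $A(J;\cdot)$ and $\#(\cdot)$ are additive over the partition, whereupon the triangle inequality and the passage to the supremum over $J$ give the claim. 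Thus it suffices to produce a partition in which each group is the index set of a $(\bsT(j),j,s)$-net for a suitable level $j$, since then its contribution is at most $\Delta_q(\bsT(j),j,s)$ by the very definition of $\Delta_q$.

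Second, I would isolate the \emph{block $\Rightarrow$ net} principle: if the indices $n$ of a group are exactly those for which $\alpha+n$ ranges over a complete $q$-adic block at level $j$, i.e.\ a set $\{cq^j+i:0\le i<q^j\}$ with fixed tail $c\in\ZZ_b$ (digits from position $j$ on constant) and first $j$ digits running through all of $\{0,\ldots,q^j-1\}$, then $[\bsx_n]_{q,j}$ forms a $(\bsT(j),j,s)$-net. This is precisely the mechanism already employed in the proof of Proposition~\ref{prop:1}: the finite-row property lets the $\tau_j$-truncation of the input decide membership in any elementary interval; the fixed tail makes the ``carry'' term constant across the group; and Condition~\ref{cond:1} (which holds for the given $\bsT$) forces the associated linear system over $\FF_q$ to have, for every admissible elementary interval, exactly the number of solutions demanded by the net definition.

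Third --- and this is where the actual work lies --- I would build the partition of $\{\alpha+n:0\le n<N\}$ into complete blocks and count them level by level. The obstacle, absent when $\alpha=0$, is that after the shift by $\alpha$ an aligned index interval $[kq^j,(k+1)q^j)$ no longer maps to a single block: a carry out of digit position $j-1$ splits it in two. To recover clean blocks I would set $r=\lfloor\log_q N\rfloor$ and let $\beta:=\alpha+(q^r-\tau_r(\alpha))$ be the first $q$-adic integer at or above $\alpha$ whose first $r$ digits all vanish, cutting the window into an ascending part $[\alpha,\beta)$ of length $q^r-\tau_r(\alpha)$ and an aligned part $[\beta,\alpha+N)$ of length $N'=N-q^r+\tau_r(\alpha)$. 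Since $\beta$ is divisible by $q^r$, the usual greedy decomposition of the aligned part via the base-$q$ digits $N'=\sum_{j=0}^r b_j q^j$ yields $b_j$ complete level-$j$ blocks, producing the term $\sum_{j=0}^r b_j\Delta_q(\bsT(j),j,s)$. For the ascending part I would peel blocks upward from $\alpha$: first $q-a_0$ singletons to reach the next multiple of $q$, and then, tracking the carry that each clearing forces into the next digit (so that digit $j$ equals $a_j+1$ when it is treated), exactly $q-1-a_j$ complete level-$j$ blocks for $j=1,\ldots,r-1$; a short telescoping check shows these lengths sum to $q^r-\tau_r(\alpha)$, certifying that the two parts tile the window exactly and yielding the first two sums of the claimed bound.

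Finally I would assemble the three contributions and flag the single routine technicality to be absorbed: the net property is phrased for the truncations $[\bsx_n]_{q,j}$, so passing to the discrepancy of the untruncated points $\bsx_n$ relies on the standard comparison between net and sequence discrepancies used throughout the theory of $(\bsT,s)$-sequences. Conceptually the whole argument is just the $q$-adic analogue of bounding the discrepancy of an arbitrary block $[\alpha,\alpha+N)$ of a digital $(\bsT,s)$-sequence; the only genuinely delicate point is the $q$-adic carry bookkeeping of the third step, everything else being either the additive splitting of the discrepancy or a direct reuse of the net argument from Proposition~\ref{prop:1}.
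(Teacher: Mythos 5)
Your proposal is correct and follows essentially the same route as the paper's proof: the same subadditive splitting of the discrepancy, the same reuse of the Proposition~\ref{prop:1} carry argument to certify that each complete $q$-adic block yields a $(\bsT(j),j,s)$-net, and the same two-part decomposition of the window into an ascending part of length $q^r-\tau_r(\alpha)$ (giving $q-a_0$ singletons and $q-1-a_j$ level-$j$ blocks) followed by the aligned residual of length $N'$ handled via its base-$q$ digits. The telescoping identity you check is exactly the paper's observation $q^r-\sum_{i=0}^{r-1}a_iq^i=q-a_0+\sum_{i}(q-1-a_i)q^{i}$.
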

\begin{proof}
First collect the digits $a_j$ with $1\leq j\leq r-1 $ of $\alpha$ satisfying $a_j\neq q-1$ and denote them by 
$a_{k(1)}, a_{k(2)},\ldots,a_{k(j')}$ where $1\leq k(1)< k(2)<\ldots<k(j')\leq r-1$ and $j'$ is the number of such digits. Observe that $q^r-\sum_{i=0}^{r-1}a_iq^i=q-a_0+\sum_{i=1}^{j'}(q-1-a_{k(i)})q^{k(i)}$. 

Bearing in mind the basic fact that if dividing a point set $\mathcal{P}$ of $N$ points into $l$ disjoint sets $\mathcal{P}_1$ with $N_1$ points, ..., $\mathcal{P}_l$ with $N_l$ points, then 
$ND_N(\mathcal{P})\leq \sum_{w=1}^lN_wD_{N_w}(\mathcal{P}_w)$ --- we divide the first $q^r-\sum_{i=0}^{r-1}a_iq^i$ points as follows. 

We start with the first $q-a_0$ points which are obviously $q-a_0$
instances of $(T(0),0,s)$-nets in base $q$. 

For the next step we observe that the $q$-adic expansions of the next $q^{k(1)}$ points have all equal digits at position $k(1)$ and larger, and the digits at positions $0$ to $k(1)-1$ span $\{0,1,\ldots,q-1\}^{k(1)}$. Hence they form via Algorithm~\ref{algo:2} a $(T(k(1)),k(1),s)$-net in base $q$. 
We have $n$ in the range $$q-a_0,q-a_0+1,\ldots,q-a_0+q^{k(1)}-1$$
and thus $n+\alpha$ of the form
$$A_1,1+A_1,\ldots,q^{k(1)}-1+A_1$$
 with $A_1:=(a_{k(1)}+1)q^{k(1)}+\sum_{i=k(1)+1}^\infty a_iq^i$. 
Now analogous argumentation as in the proof of Proposition~\ref{prop:1} imply that these $q^{k(1)}$ points form a $(\bsT(k(1)),k(1),s)$-net in base $q$. Altogether we obtain step by step $(q-1-a_{k(1)})$ nets of this form. 
Absolutely identically we obtain $q-1-a_{k(2)}$, $(\bsT(k(2)),k(2),s)$-nets in base $q$, ..., and  $q-1-a_{k(j')}$, $(\bsT(k(j')),k(j'),s)$-nets in base $q$. 
This explains the first two terms in the upper bound. 

It remains to estimate the discrepancy of the residual $N'=N-q^r+\sum_{j=0}^{r-1}a_jq^j$ points. We denote the base $q$ representation of $N'$ by $N'=\sum_{j=0}^r b_jq^j$. Now we collect the digits 
$b_j$ with $1\leq j\leq r $ of $N'$ satisfying $b_j\neq 0$ and denote them by 
$b_{k(1)}, b_{k(2)},\ldots,b_{k(j'')}$, where $1\leq k(1)< k(2)<\ldots<k(j'')\leq r$, and $j''$ is the number of such digits. 

We regard the next $q^{k(j'')}$ points, i.e. $n$ in the range 
$$q^r-\sum_{i=0}^{r-1}a_iq^i,q^r-\sum_{i=0}^{r-1}a_iq^i+1,\ldots,q^r-\sum_{i=0}^{r-1}a_iq^i +q^{k(j'')-1}.$$ Now adding $\alpha$ yields 
$$A_2,1+A_2,\ldots, q^{k(j'')}-1+A_2$$
where $A_2:=(a_r+1)q^r+\sum_{i=r+1}^\infty a_iq^i$. 
As above the $b$-adic digits for powers with exponents at least $k(j'')$ are fixed and the first $k(j'')$ digits span $\{0,1,\ldots,q-1\}^{k(j'')}$. Hence those point sets form a $(T(k(j'')),k(j''),s)$-net in base $q$. 

Step by step we obtain $b_{k(j'')}$ such nets and we obtain the first summand in the second sum of the theorem. Again step by step, we obtain the last sum of the upper bound. 
\end{proof}

\begin{coro}\label{coro:2}
If, furthermore, $\bsT(m)\leq t$ with $t\in\NN_0$ in Proposition~\ref{prop:2} then 
$\mathcal{S}$ is a low-discrepancy sequence. 
\end{coro}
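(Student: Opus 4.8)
The plan is to feed the bound from Proposition~\ref{prop:2} into the definition \eqref{equ:DB} of a low-discrepancy sequence, so that the whole argument reduces to showing that the right-hand side of that bound is $O((\log N)^s)$. The crucial first step is to control a single factor $\Delta_q(\bsT(j),j,s)$ uniformly in $j$. Invoking the hypothesis $\bsT(m)\le t$ together with Niederreiter's explicit expression for $\Delta_q$ quoted above (valid for $q>2$; the case $q=2$ is analogous and of the same order), I bound $q^{\bsT(j)}\le q^{t}$ and $\binom{j-\bsT(j)}{i}\le\binom{j}{i}$ for each $0\le i\le s-1$, obtaining
$$\Delta_q(\bsT(j),j,s)\le q^{t}\sum_{i=0}^{s-1}\binom{s-1}{i}\binom{j}{i}\left\lfloor\frac q2\right\rfloor^{i}\le C\,j^{\,s-1}\qquad(j\ge1),$$
for a constant $C=C(q,s,t)$, the dominant contribution coming from $i=s-1$ with $\binom{j}{s-1}=O(j^{s-1})$. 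This is the essential observation: capping the quality parameter by the constant $t$ keeps the exponential factor $q^{\bsT(j)}$ uniformly bounded, so each net-discrepancy bound grows only polynomially of degree $s-1$ in its level $j$.

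Next I would bound the three groups of terms in Proposition~\ref{prop:2} using that all the coefficients are base-$q$ digits. In the middle sum $0\le q-1-a_j\le q-1$ since $0\le a_j\le q-1$, and in the last sum $0\le b_j\le q-1$ since $\sum_{j}b_jq^{j}$ is a base-$q$ expansion; the two contributions at the fixed level $j=0$, namely $(q-a_0)\Delta_q(\bsT(0),0,s)$ and $b_0\Delta_q(\bsT(0),0,s)$, are $O(1)$. Combining with the previous estimate gives
$$ND_N(\mathcal S)\le O(1)+(q-1)C\sum_{j=1}^{r-1}j^{\,s-1}+(q-1)C\sum_{j=1}^{r}j^{\,s-1}=O\!\left(\sum_{j=1}^{r}j^{\,s-1}\right)=O(r^{s}),$$
where the final equality is the elementary estimate $\sum_{j\le r}j^{s-1}=O(r^{s})$.

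Finally, since $r=\lfloor\log_q N\rfloor=O(\log N)$, this yields $ND_N(\mathcal S)=O((\log N)^{s})$, and because $D_N^{*}\le D_N$ the same order holds for the star discrepancy. Hence $\mathcal S$ attains the order of magnitude in \eqref{equ:DB} and is therefore a low-discrepancy sequence.

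I expect the only delicate point to be the first step: verifying that the polynomial order in $j$ of $\Delta_q(\bsT(j),j,s)$ is exactly $s-1$ and is \emph{uniform} in $j$ once $\bsT$ is bounded by the constant $t$ (this is precisely where the hypothesis $\bsT(m)\le t$ enters, controlling the exponential factor $q^{\bsT(j)}$). Everything thereafter is the summation of $O(j^{s-1})$ over $j\le r$ and the conversion $r=\lfloor\log_q N\rfloor$. A minor bookkeeping caveat is that the explicit formula cited is stated for $q>2$, so for $q=2$ one should instead invoke the corresponding net-discrepancy bound of the same order $O(j^{s-1})$, which does not affect the conclusion.
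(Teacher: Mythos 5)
Your proposal is correct and follows essentially the same route as the paper: both insert a uniform bound $\Delta_q(\bsT(j),j,s)=O(j^{s-1})$ (valid because $\bsT(j)\le t$ caps the factor $q^{\bsT(j)}$) into the estimate of Proposition~\ref{prop:2} and then sum over $j\le r=\lfloor\log_q N\rfloor$ to get $ND_N(\mathcal S)=O((\log N)^s)$. The only cosmetic difference is that you derive the $O(j^{s-1})$ bound from Niederreiter's explicit formula, whereas the paper cites the Faure--Kritzer bound $\Delta_q(t,j,s)\le c_{q,s,t}j^{s-1}+O(j^{s-2})$ for the sharpest constant.
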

\begin{proof}
In this case we have $\Delta_q(t,j,s)\leq c_{q,s,t}j^{s-1}+\mathcal{O}(j^{s-2})$ for all $j\geq 0$, where the implied constant is independent of $j$. (See \cite{FaKr} for the best known constant $c_{q,s,t}$.) Using this bound in the upper bound of Proposition~\ref{prop:2} we obtain the desired result 
$ND_N(\mathcal{S})=O((\log N)^{s})$.
\end{proof}

\begin{coro}[$s_n=\frac{1}{v}n+\alpha$]\label{coro:3}
Let $s$ be a dimension, $q$ be a prime power, $t$ be  nonnegative integer, $R=\FF_q$, and $C^{(1)},\ldots,C^{(s)}\in\FF_q^{\NN\times \NN_0}$ be finite-row generating matrices satisfying the quality parameter function $\bsT\equiv t$ in Condition \ref{cond:1}. Furthermore, let $\alpha$ be a $p$-adic integer and $v\in\NN$ satisfying $\gcd(v,q)=1$. Set $(s_n)_{n\geq 0}=(\frac{1}{v}n+\alpha)_{n\geq 0}$. Then the sequence $\mathcal{S}$ produced by Algorithm \ref{algo:2} is a low-discrepancy sequence. 
\end{coro}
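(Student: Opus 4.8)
The plan is to reduce the statement to the already-established case $s_n = n+\alpha$ of Corollary~\ref{coro:2}, exploiting that $\gcd(v,q)=1$ makes $v$ a unit in $\ZZ_q$, so that $\tfrac1v$ is a well-defined $q$-adic integer. The key observation is a reindexing according to the residue of $n$ modulo $v$: for each $\ell\in\{0,1,\dots,v-1\}$ and each $k\ge 0$, writing $n=vk+\ell$ gives
$$s_{vk+\ell}=\tfrac1v(vk+\ell)+\alpha=k+\Big(\alpha+\tfrac{\ell}{v}\Big)=:k+\alpha_\ell,$$
where $\alpha_\ell:=\alpha+\tfrac{\ell}{v}\in\ZZ_q$ precisely because $v$ is a unit. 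Hence the subsequence $(\bsx_{vk+\ell})_{k\ge0}$ of $\mathcal{S}$ is exactly the sequence produced by Algorithm~\ref{algo:2} from the \emph{same} matrices $C^{(1)},\dots,C^{(s)}$ together with the integer-index-plus-shift sequence $(k+\alpha_\ell)_{k\ge0}$.

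First I would apply Corollary~\ref{coro:2} (through Proposition~\ref{prop:2}) to each fixed $\ell$: since the matrices satisfy $\bsT\equiv t$ and $\alpha_\ell$ is a $q$-adic integer, each subsequence $\mathcal{S}_\ell:=(\bsx_{vk+\ell})_{k\ge0}$ is a low-discrepancy sequence, i.e.\ $M D_M(\mathcal{S}_\ell)=O((\log M)^s)$ with an implied constant independent of $M$ (though possibly depending on $\ell,v,q,s,t$). Then I would reassemble $\mathcal{S}$ from its $v$ interleaved subsequences. Fixing $N$, the first $N$ points of $\mathcal{S}$ split according to $n\bmod v$ into $v$ disjoint point sets, the $\ell$-th of which consists of the first $N_\ell=\#\{k:vk+\ell<N\}\le N/v+1$ points of $\mathcal{S}_\ell$. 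Applying the subadditivity of $ND_N$ under disjoint partitions recalled in the proof of Proposition~\ref{prop:2},
$$N D_N(\mathcal{S})\le\sum_{\ell=0}^{v-1}N_\ell D_{N_\ell}(\mathcal{S}_\ell)\le\sum_{\ell=0}^{v-1}O\big((\log N_\ell)^s\big)=O\big((\log N)^s\big),$$
where the final bound uses that $v$ is a fixed constant and $N_\ell\le N$ (the trivial estimate $N_\ell D_{N_\ell}\le N_\ell\le v$ absorbs the finitely many terms with $N_\ell<2$). This yields the claimed low-discrepancy property.

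The crux is spotting the reindexing $s_{vk+\ell}=k+\alpha_\ell$ that turns a single affine $q$-adic sequence with unit slope $\tfrac1v$ into $v$ cleanly shifted copies of the integer-index construction; once this is in hand, the argument rests entirely on Corollary~\ref{coro:2} and the standard interleaving estimate, both routine. The only points requiring a little care are verifying $\alpha_\ell\in\ZZ_q$ (this is exactly where $\gcd(v,q)=1$ enters) and confirming that the $\ell$-th residue class really selects a \emph{prefix} of $\mathcal{S}_\ell$, so that the prefix discrepancy $D_{N_\ell}(\mathcal{S}_\ell)$ is the quantity controlled by Corollary~\ref{coro:2}.
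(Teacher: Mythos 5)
Your proposal is correct and follows essentially the same route as the paper: both split $\mathcal{S}$ by the residue of $n$ modulo $v$ into the $v$ shifted integer-index sequences $(k+\alpha+\tfrac{\ell}{v})_{k\ge0}$, use $\gcd(v,q)=1$ to see that each shift is a $q$-adic integer, invoke Corollary~\ref{coro:2} on each subsequence, and recombine via the subadditivity of $ND_N$ under disjoint partitions. Your write-up merely makes explicit the interleaving estimate that the paper leaves implicit in ``the result follows.''
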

\begin{proof}
The strategy is to split the sequence into $v$ subsequences $$(s^{(1)}_n)_{n\geq 0},(s^{(2)}_n)_{n\geq 0},\ldots,(s^{(1)}_n)_{n\geq 0}$$ of the form 
\begin{align*}
s^{(1)}_n &= n+\alpha=:n+\alpha_1\\
s^{(2)}_n &= n+\frac{1}{v}+\alpha=:n+\alpha_2\\
& \vdots \\
s^{(v)}_n &= n+ \frac{(v-1)}{v}+\alpha=:n+\alpha_v.
\end{align*}
Since $\gcd(q,v)=1$ all of the fractions and thus all of $\alpha_1,\alpha_2,\ldots,\alpha_v$ are 
 $q$-adic integers.
 Then all $v$ subsequences are low-discrepancy sequences by Corollary~\ref{coro:2} and the result follows.

\end{proof}

\begin{remark}
Previously, subsequences of digital sequences produced by Algorithm~\ref{algo:1} have been investigated, see, e.g., \cite{hklp,hoferMCQMC,HoLaZe,HoZe}. In particular, subsequences indexed by arithmetic progressions were discussed in \cite{hklp,hoferMCQMC}. Unfortunately, the discrepancy of such sequences is a difficult subject and there exist negative results such as \cite[Example~5]{hoferMCQMC}. Hence the obvious generalization of Corollary~\ref{coro:3} to 
$s_n=\frac{u}{v}n+\alpha$ would be a difficult task as well. \end{remark}

\begin{example}

The following plots give a comparison between three different input sequences:
first, the classical sequence $s_n=n$, then the alternating sequence
$s_n=(-1)^n\lfloor (n+1)/{2}\rfloor$ and finally the sequence $s_n=(2n-1)/4$.

The generating matrices are the first two Stirling matrices in base 5 seen in
Example 1 and the number of points is 500.

\noindent
\includegraphics[width=\textwidth]{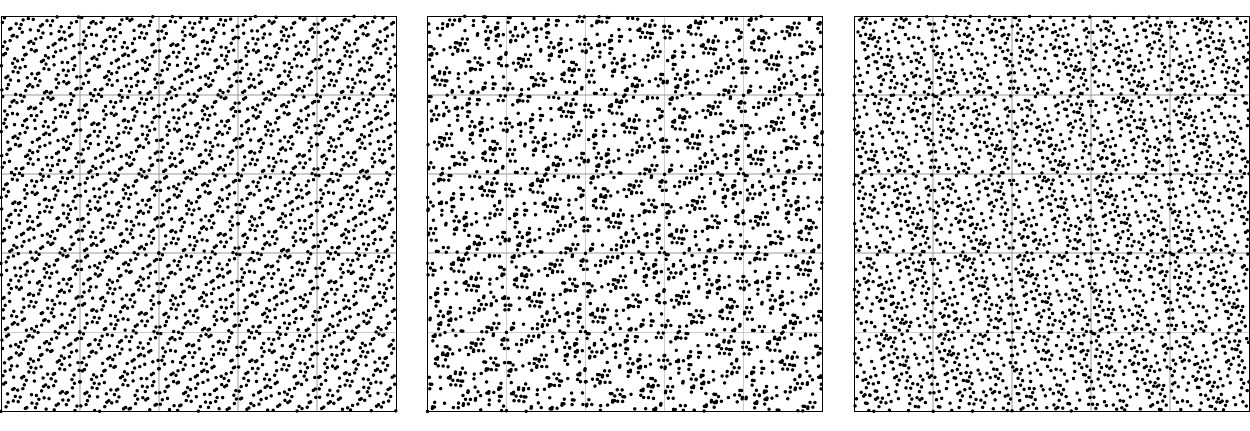}
\end{example}


\begin{thebibliography}{00}
\vskip.3cm
\small



\bibitem{DP10}
{\sf DICK, J.  -- PILLICHSHAMMER, F.:}
{\textsl{Digital Nets and Sequences. Discrepancy Theory and Quasi-Monte Carlo
 Integration}}, Cambridge University Press, Cambridge, 2010.

\bibitem{FaKr} 
{\sf FAURE, H. -- KRITZER, P.:}
\textit{New star discrepancy bounds for $(t,m,s)$-nets and $(t,s)$-sequences},
 {Monatsh. Math.} \textbf{172} (2013),  55--75.

\bibitem{HelNie} 
{\sf HELLEKALEK, P. -- NIEDERREITER, H.:}
\textit{
Constructions of uniformly distributed sequences using the $b$-adic method},
Uniform Distribution Theory \textbf{6} (2011),  no.\ 1, 185--200.

\bibitem{hoFFA}
{\sf HOFER, R.:}
\textit{A construction of digital $(0,s)$-sequences involving finite-row
  generator matrices},
{ Finite Fields Appl.} \textbf{18} (2012), 587--596.

\bibitem{hoMM}
{\sf HOFER, R.:}
 \textit{A construction of low-discrepancy sequences involving finite-row digital $(t,s)$-sequences}, Monatsh. Math. \textbf{ 171} (2013), 77--89.

\bibitem{hoferMCQMC}
{\sf HOFER, R.:}\textit{
 On Subsequences of Niederreiter-Halton Sequences},
 Monte Carlo and Quasi-Monte Carlo Methods 2008, 2009, pp.\ 423--438.


\bibitem{hklp}
{\sf HOFER, R. -- KRITZER, P. -- LARCHER, G. -- PILLICHSHAMMER, F.:}
\textit{Distribution properties of generalized van der Corput-Halton
  sequences and their subsequences},
{J. Number Theory} \textbf{5} (2009), no.\ 4, 719--746.

\bibitem{hoflar} 
{\sf HOFER, R. --  LARCHER, G.:}
\textit{
On existence and discrepancy of certain digital Niederreiter-Halton sequences},
Acta Arith. \textbf{141} (2010), no.\ 4, 369--394.

\bibitem{HoLaZe} 
{\sf HOFER, R. --  LARCHER, G. -- ZELLINGER, H.:}
\textit{
 On the Digits of Squares and the Distribution of Quadratic Subsequences of Digital Sequences},
 Proceedings of the AMS \textbf{141} (2012), no.\ 5, 1551--1565. 

 \bibitem{HofNie} 
{\sf HOFER, R. -- NIEDERREITER, H.:}
\textit{
A construction of (t,s)-sequences with finite-row generating matrices using global function fields}, Finite Fields Appl. \textbf{ 21} (2013), 97--110.

 
\bibitem{HofPir11}
{\sf HOFER, R. -- PIRSIC, G.}
\textit{An explicit construction of finite-row digital $(0,s)$-sequences},
{ Uniform Distribution Theory} \textbf{6} (2011), no.\ 2, 13--30.

\bibitem{HofPir12}
{\sf HOFER, R. -- PIRSIC, G.}
\textit{A finite-row scrambling of Niederreiter sequences},
Monte Carlo and Quasi-Monte Carlo Methods 2012 \textbf{65}, 
2013, pp.\ 427--437. 

\bibitem{HoZe}
{\sf HOFER, R. -- ZELLINGER, H.:}
\textit{
 Distribution Properties of Certain Subsequences of Digital Sequences and Their Hybrid Version},
 Uniform Distribution Theory \textbf{8} (2013), no.\ 2, 121--140. 

\bibitem{KN74}
{\sf KUIPERS, L. -- NIEDERREITER, H.:}
\textsl{{Uniform Distribution of Sequences}}. Wiley,
New York, 1974.

\bibitem{LN95} 
{\sf LARCHER, G. -- NIEDERREITER, H.:}
\textit{Generalized $(t,s)$-sequences, Kronecker-type sequences,
and Diophantine approximations of formal Laurent series},
Trans. Amer. Math. Soc. \textbf{347} (1995), no.\ 6, 2051--2073.

\bibitem{Mahler} 
{\sf MAHLER, K.:}
\textsl{$p$-adic numbers and their functions}, Second edition, Cambridge University Press, Cambridge, 1981.

\bibitem{Meijer} 
{\sf MEIJER, H.G.:}
\textit{Uniform distribution of $g$-adic integers}, Nederl. Akad. Wetensch. Proc. Ser. \textbf{A 70}=Indag. Math. \textbf{29} (1967), 535--546. 


\bibitem{MeijerShiue} 
{\sf MEIJER, H.G. -- SHIUE, J.S.:}
\textit{Uniform distribution in $\ZZ_{g}$ and 
$\ZZ_{g_{1}}\times\dots\times\ZZ_{g_{t}}$},
Indag. Math. \textbf{79} (1976), 200--212. 

\bibitem{N87} 
{\sf NIEDERREITER, H.:}
\textit{Point sets and sequences with small
discrepancy}, Monatsh. Math. \textbf{104} (1987), no.\ 4, 273--337.

\bibitem{niesiam} 
{\sf NIEDERREITER, H.:}
\textsl{Random Number Generation and Quasi-Monte
Carlo Methods}, CBMS-NSF Regional Conference Series in Applied Mathematics 
\textbf{63}, SIAM,
Philadelphia, 1992.

\bibitem{NieOez}
{\sf NIEDERREITER, H. -- \"{O}ZBUDAK, F.:}
\textit{Low-discrepancy sequences using duality and global function fields},
{Acta Arith.} \textbf{130} (2007), 79--97.

\bibitem{NiXi96}
{\sf NIEDERREITER, H. -- XING, C.P.:}
\textit{
Low-discrepancy sequences and global function fields with many rational 
places}, Finite Fields Appl. \textbf{2} (1996), 241--273.

\bibitem{Tez93} 
{\sf TEZUKA, S.:}
\textit{Polynomial arithmetic analogue of Halton sequences}, 
{ACM Trans. Model. Comput. Simulation} \textbf{3} (1993), 99--107.

\bibitem{XiNi95}
{\sf NIEDERREITER, H. -- XING, C.P.:}
\textit{A construction of low-discrepancy sequences using global function 
fields}, {Acta Arith.} \textbf{73} (1995), 87--102.

\end{thebibliography}
\end{document}